\newtheorem{thm}{Theorem}[section]
\newtheorem{lem}[thm]{Lemma}
\newtheorem{defn}[thm]{Definition}
\renewcommand{\leq}{\leqslant}
\renewcommand{\geq}{\geqslant}
\renewcommand{\O}{\mathcal{O}}
\newcommand{\Var}{\operatorname{Var}}
\newcommand{\E}{\operatorname{E}}
\newcommand{\1}{\mathds{1}}
\newcommand{\Irr}{\operatorname{Irr}}
\renewcommand{\(}{\left(}
\renewcommand{\)}{\right)}
\newcommand{\floor}[1]{\left\lfloor#1\right\rfloor}
\def\fC{\mathfrak{C}}
\def\fM{\mathfrak{M}}
\title{The Probability of Generating the Symmetric Group}
\author{Sean Eberhard and Stefan-Christoph Virchow}
\begin{document}
\begin{abstract}
We consider the probability $p(S_n)$ that a pair of random permutations generates either the alternating group $A_n$ or the symmetric group $S_n$. Dixon (1969) proved that $p(S_n)$ approaches  $1$ as $n\to\infty$ and conjectured that $p(S_n)=1-1/n+o(1/n)$. This conjecture was verified by Babai (1989), using the Classification of Finite Simple Groups. We give an elementary proof of this result; specifically we show that $p(S_n)=1-1/n+\mathcal {O}(n^{-2+\epsilon})$. Our proof is based on character theory and character estimates, including recent work by Schlage-Puchta (2012).
\end{abstract}

\maketitle

\section{Introduction}
Let $G=A_n$ or $G=S_n$. We consider the probability
\begin{equation*}
p(G):=\frac{\#\{(\pi,\sigma)\in G\times G:~\langle\pi,\sigma\rangle\geq A_n\}}{|G|^2}
\end{equation*}
of ordered pairs $(\pi,\sigma)\in G\times G$ generating either the alternating group $A_n$ or the symmetric group $S_n$.

E. Netto \cite[p.\,90]{Ne1} conjectured that almost all pairs of elements from $S_n$ will generate either $A_n$ or $S_n$. J. D. Dixon \cite{Di2} was the first to prove Netto's conjecture. More precisely, he established that
\begin {equation*}
p(S_n)>1-\frac{2}{(\log\log n)^2}
\end{equation*}
for all sufficiently large $n$. Dixon conjectured that the term $2/(\log\log n)^2$ can be replaced by one of order $1/n$. J. Bovey and A. Williamson \cite{Bo1} improved Dixon's estimate to
\begin{equation*}
p(S_n)>1-\exp(-\sqrt{\log n}).
\end{equation*}
This was subsequently amended by Bovey~\cite{Bo2} to
\[
p(S_n) > 1-n^{-1+o(1)}.
\]
Finally, L. Babai \cite{Ba2} proved Dixon's conjecture and showed that
\begin{equation*}
p(S_n)=1-\frac{1}{n}+\mathcal{O}(n^{-2})
\end{equation*}
for all sufficiently large $n$.\\
In 2005, Dixon \cite{Di3} established an even better asymptotic formula for $p(S_n)$ and for $p(A_n)$. For $m\in\mathbb{N}$ the asymptotic formula is
\begin{equation*}
p(S_n)=1+\frac{c_1}{n}+\frac{c_2}{n^2}+\ldots+\frac{c_m}{n^m}+\mathcal{O}(n^{-(m+1)}),
\end{equation*}
where the coefficients $c_m$ are effectively computable. The same expansion holds for $p(A_n)$. The expansion begins
\begin{equation*}
p(S_n) = 1-\frac{1}{n}-\frac{1}{n^2}-\frac{4}{n^3}-\frac{23}{n^4}-\frac{171}{n^5}-\frac{1542}{n^6}+\mathcal{O}(n^{-7}).
\end{equation*}
See \cite{oeis} for more terms.

Babai's proof of Dixon's conjecture and Dixon's preceding asymptotic formulas for $p(S_n)$ and $p(A_n)$ rest on consequences of the Classification of Finite Simple Groups (CFSG). Babai points out \cite[Remark 1]{Ba2} that it would be desirable to find an elementary proof of Dixon's conjecture. Our aim is to give such an elementary proof. \textit{Our methods are based on character estimates and recent work by J.-C. Schalge-Puchta \cite{Sc1}  and do not need the Classification of Finite Simple Groups.} Our main results are

\begin{thm}
\label{1-1-1}
Let $\epsilon>0$. Then we have
\begin{equation*}
p(S_n)=1-\frac{1}{n}+\mathcal{O}\left(n^{-2+\epsilon}\right)
\end{equation*}
for all sufficiently large $n$.
\end{thm}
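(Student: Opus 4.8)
The plan is to classify the group $G=\langle\pi,\sigma\rangle$ by its action on $\{1,\dots,n\}$: a subgroup of $S_n$ fails to contain $A_n$ exactly when it is (i) intransitive, (ii) transitive but imprimitive, or (iii) primitive and distinct from $A_n$ and $S_n$, and these events are mutually exclusive. Hence
\[
1-p(S_n)=\Pr[G\text{ intransitive}]+\Pr[G\text{ imprimitive}]+\Pr[G\text{ primitive and }G\neq A_n,S_n].
\]
I expect the first term to be the main term $1/n+\mathcal{O}(n^{-2})$, the second to be exponentially small, and the bound $\mathcal{O}(n^{-2+\epsilon})$ on the third to be the real content.

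For (i) I would run, in effect, Dixon's classical recursion. Writing $q_m:=\Pr_{(\pi,\sigma)\in S_m^2}[\langle\pi,\sigma\rangle\text{ intransitive}]$ and conditioning on the orbit of the point $1$, a fixed $k$-subset containing $1$ is that orbit with probability $\bigl(k!\,(n-k)!/n!\bigr)^2(1-q_k)$, so summing over the $\binom{n-1}{k-1}$ such subsets and over $1\leq k\leq n-1$ gives
\[
q_n=\frac{1}{n}\sum_{k=1}^{n-1}\frac{k}{\binom{n}{k}}\,(1-q_k),\qquad q_1=0.
\]
The terms $k=n-1$, $k=n-2$ (a group with one, resp.\ two, common fixed points) and $k=1$ dominate, the remaining ones summing to $\mathcal{O}(n^{-2})$; an easy induction then yields $q_n=1/n+\mathcal{O}(n^{-2})$ two-sidedly.

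For (ii) the union bound over nontrivial block systems is already enough: a fixed block system with blocks of size $d$ (where $d\mid n$, $1<d<n$) is preserved by $G$ with probability $\bigl((d!)^{n/d}(n/d)!/n!\bigr)^2$, there are $n!/\bigl((d!)^{n/d}(n/d)!\bigr)$ of them, so block size $d$ contributes exactly $(d!)^{n/d}(n/d)!/n!$, which by Stirling is $e^{-\Omega(n)}$ uniformly in $d$; summing over the at most $n$ block sizes keeps $\Pr[G\text{ imprimitive}]=e^{-\Omega(n)}$.

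Case (iii) is \textbf{the main obstacle}: without the Classification one has neither a list of the primitive groups nor a usable bound on their number, so a union bound is unavailable. Instead I would use a theorem of Jordan --- a primitive subgroup of $S_n$ containing a cycle of prime length $p\leq n-3$ contains $A_n$ --- together with the elementary fact that a transitive group containing a single $p$-cycle ($p$ prime) with at least three fixed points cannot be imprimitive, hence is automatically primitive and therefore $\geq A_n$. So it is enough to show that, apart from a set of pairs of measure $\mathcal{O}(n^{-2+\epsilon})$, the group $G$ is either intransitive or contains such an element. For the latter I would probe with suitable elements of $G$ (e.g.\ $\pi\sigma^j$ or short words, $0\leq j\leq C(\log n)^2$), using that an appropriate power of any permutation with a cycle of prime length $p\in(n/2,n-3]$ is exactly a single $p$-cycle fixing the rest, and that a uniform random permutation has such a cycle with probability of order $1/\log n$ (from the prime number theorem and the fact that a random permutation has a given cycle length $\ell>n/2$ with probability $1/\ell$). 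The decisive input is then a character estimate in the spirit of Fomin--Lulov and of Schlage-Puchta~\cite{Sc1}: expanding the event that all probes avoid these cycle lengths over $\Irr(S_n)$ and bounding it by means of estimates for $|\chi(g)|$ and for $\sum_{\chi\in\Irr(S_n)}\chi(1)^{-s}$, one shows that the joint probability is close to the product of the marginals, so the probability that every probe fails is at most $(1-c/\log n)^{\Theta((\log n)^2)}=\mathcal{O}(n^{-2})$. Collecting the three cases gives $p(S_n)=1-1/n+\mathcal{O}(n^{-2+\epsilon})$.
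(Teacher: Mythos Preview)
Your trichotomy and your treatment of cases (i) and (ii) match the paper's setup (which simply cites Dixon for those two estimates). Your instinct for (iii) --- probe with elements $\pi\sigma^j$, hope one lands in the set $\fC$ of permutations containing a prime cycle of length in $(n/2,n-3]$, then invoke Jordan --- is also exactly the paper's strategy. But the execution you sketch has two genuine gaps.

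First, the product bound $(1-c/\log n)^{\Theta((\log n)^2)}$ is not what the character method delivers. Expanding the pairwise correlations $Q(\pi\sigma^i\in\fC,\ \pi\sigma^j\in\fC)$ via the class-multiplication formula gives only \emph{pairwise} approximate independence, and Chebyshev then yields $P(X=0)\leq \Var X/(\E X)^2$. With $N$ probes each succeeding with probability $\asymp 1/\log n$, already the diagonal part of $\Var X$ forces $P(X=0)\lesssim (\log n)/N$, so $N=C(\log n)^2$ cannot beat $O(1/\log n)$. There is no route from the two-point character expansion to a genuine $N$-fold product; the paper instead takes $N=n^2$ and lives with the second-moment loss.

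Second --- and this is the step your sketch omits entirely --- the off-diagonal variance involves $\sum_{\sigma}|\chi(\sigma^\nu)|/\chi(1)$, and the M\"uller--Schlage-Puchta bound $|\chi(g)|\leq\chi(1)^{1-\delta(g)}$ is governed by the number of fixed points of $g=\sigma^\nu$. For arbitrary $\sigma$ the power $\sigma^\nu$ can fix almost everything (take $\sigma$ with many short cycles), so $\delta$ collapses. The paper first uses Babai's elementary minimal-degree bound for primitive groups to restrict to $\sigma\in\fM=\{\sigma:m(\langle\sigma\rangle)>\sqrt n/2\}$, which forces $\sigma^\nu$ to move at least $\sqrt n/2$ points whenever $\sigma^\nu\neq1$. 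The residual contribution from $\sigma^\nu=1$ then requires a separate arithmetic count, $\#\{(\nu,\sigma):1\leq\nu\leq N,\ \sigma\in S_n,\ \sigma^\nu=1\}=N^{1+o(1)}n!\,n^{-2}$, which is itself a nontrivial lemma. Without both devices the variance bound does not close, and your argument for (iii) does not reach $n^{-2+\epsilon}$.
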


\begin{thm}
\label{1-1-2}
Let $\epsilon>0$. Then we get
\begin{equation*}
p(A_n)=1-\frac{1}{n}+\mathcal{O}\left(n^{-2+\epsilon}\right)
\end{equation*}
for all sufficiently large $n$.
\end{thm}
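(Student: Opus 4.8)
The plan is to deduce the statement for $A_n$ from Theorem \ref{1-1-1} by a direct counting argument relating pairs in $A_n\times A_n$ to pairs in $S_n\times S_n$. The key observation is that a pair $(\pi,\sigma)$ generates a group containing $A_n$ if and only if the subgroup it generates is either $A_n$ itself or $S_n$; and whether $\langle\pi,\sigma\rangle\geq A_n$ holds is insensitive to replacing generators by other generators of the same group. So I would first partition $S_n\times S_n$ according to how many of $\pi,\sigma$ lie in $A_n$: write $N(G)$ for the number of ordered pairs in $G\times G$ generating a group $\geq A_n$, so $p(A_n)=N(A_n)/|A_n|^2$ and $p(S_n)=N(S_n)/|S_n|^2$. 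Among pairs $(\pi,\sigma)\in S_n\times S_n$ with $\langle\pi,\sigma\rangle\geq A_n$, those with both $\pi,\sigma\in A_n$ are exactly the $N(A_n)$ pairs counted for $A_n$; the remaining $N(S_n)-N(A_n)$ pairs have at least one generator an odd permutation (and then automatically generate $S_n$).

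Next I would estimate the number of "mixed'' pairs — those with at least one odd generator — that fail to generate $A_n$ or $S_n$. The complementary count is the number of pairs $(\pi,\sigma)\in S_n\times S_n$, not both even, whose generated group $H$ satisfies $H\not\geq A_n$; since $H$ then has even order divisible by $2$ but is a proper transitive (or intransitive/imprimitive) subgroup, this is controlled by exactly the same kind of subgroup-counting estimates already developed in the proof of Theorem \ref{1-1-1}. Concretely, I expect to have available from the $S_n$ argument a bound of the shape (number of pairs in $S_n\times S_n$ with $\langle\pi,\sigma\rangle\not\geq A_n$) $=\mathcal{O}(|S_n|^2 n^{-1+\epsilon})$ decomposed by the type of proper subgroup (intransitive, imprimitive, primitive proper), and the contribution with an odd generator is no larger. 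Feeding $|S_n|=2|A_n|$ and $N(S_n)=|S_n|^2 p(S_n)$, $N(A_n)=|A_n|^2 p(A_n)$ into the identity $N(A_n) = N(S_n) - (\text{mixed pairs generating }A_n\text{ or }S_n)$ and rearranging, the main terms $1-1/n$ match up and the error terms are all $\mathcal{O}(n^{-2+\epsilon})$.

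The main obstacle is making the "mixed pairs'' bookkeeping precise: one must check that the error incurred by pairs with an odd permutation among the generators is genuinely of the same order $\mathcal{O}(|A_n|^2 n^{-2+\epsilon})$ and not merely $\mathcal{O}(|A_n|^2 n^{-1+\epsilon})$, which requires re-running the sharp part of the Theorem \ref{1-1-1} analysis (the estimate showing that the dominant obstruction is a single fixed point, contributing the $1/n$, while all other obstructions are $\mathcal{O}(n^{-2+\epsilon})$) while tracking the parity of the generators. I would therefore organize the $S_n$ proof so that its key proposition is stated uniformly enough to apply verbatim to pairs constrained to a coset of $A_n$, and then Theorem \ref{1-1-2} follows by the arithmetic above with no new input.
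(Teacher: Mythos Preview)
Your approach differs from the paper's and is more circuitous than necessary. The paper does not deduce Theorem~\ref{1-1-2} from Theorem~\ref{1-1-1}; instead it proves both simultaneously from the same three ingredients. The decomposition is by the type of subgroup generated: (i) intransitive, (ii) transitive imprimitive, (iii) primitive but not containing $A_n$. For (i), Dixon's asymptotic series for the proportion of pairs generating a transitive subgroup~\cite[Theorem~2]{Di3} is stated and proved for both $S_n$ and $A_n$ and already gives the main term $1-1/n$ with error $\mathcal{O}(n^{-2})$; for (ii), Dixon's bound $\leq n2^{-n/4}$~\cite{Di2} is negligible; for (iii), the paper's own bound $P_2(n)\leq n^{-2+o(1)}$ is a count over $S_n\times S_n$, so over $A_n\times A_n$ one loses at most a factor of~$4$, which is harmless. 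Thus Theorem~\ref{1-1-2} follows with no extra work once $P_2(n)$ is bounded.

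Your plan, by contrast, routes through coset bookkeeping and --- as you yourself flag --- ultimately requires re-running the sharp intransitive analysis while tracking parity of the generators to see that the ``mixed'' cosets also contribute $1/n+\mathcal{O}(n^{-2+\epsilon})$. That computation can be done (the parity of a permutation fixing a point is equidistributed, so the leading term does split evenly across cosets), but once you have reorganized the key proposition to apply to an arbitrary coset pair, you have in particular proved it for $A_n\times A_n$ directly, and the detour through $N(S_n)-N(A_n)$ buys nothing. So your proposal is not wrong, but it replaces a one-line observation (the $P_2(n)$ bound and Dixon's results already cover $A_n$) with a parity-tracking re-derivation that is strictly more work.
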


The main challenge in proving these Theorems is bounding the probability that a pair of random permutations generates a primitive subgroup other than $A_n$ or $S_n$. In \cite{Di3} Dixon gave an asymptotic series for the proportion of pairs generating a transitive subgroup, and in \cite{Di2} he proved that the proportion of pairs generating a transitive, imprimitive subgroup is $\leq n 2^{-n/4}$, so all that is left is to bound
\[
  P_2(n) = P(\{(\pi,\sigma) \in S_n^2 : \pi, \sigma\in H~\text{for some primitive}~H\not\geq A_n\}),
\]
where $P$ denotes the uniform distribution on $S_n\times S_n$. Using CFSG, Babai~\cite{Ba2} proved that $P_2(n) \leq n^{\sqrt{n}}/n!$. Without CFSG, Bovey~\cite{Bo2} proved that $P_2(n) \leq n^{-1+o(1)}$. We will improve this to $P_2(n) \leq n^{-2+o(1)}$: once we have this then Theorems~\ref{1-1-1} and \ref{1-1-2} follow from the above mentioned results and \cite[Theorem 2]{Di3}.

\begin{thm}
$P_2(n) \leq n^{-2+o(1)}$ as $n\to\infty$.
\end{thm}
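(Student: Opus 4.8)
The plan is to reduce $P_2(n)$ to a sum over conjugacy classes of maximal primitive subgroups and then estimate that sum. A pair $(\pi,\sigma)$ lies in some primitive $H\not\geq A_n$ if and only if it lies in some \emph{maximal} such $H$; for a fixed $H$ there are exactly $|H|^2$ pairs inside $H$, there are $|S_n:N_{S_n}(H)|\leq|S_n:H|$ conjugates of $H$, and every relevant pair is counted at least once among them. Hence
\begin{equation*}
  P_2(n)\leq\frac{1}{(n!)^2}\sum_{[H]}|S_n:H|\,|H|^2=\frac{1}{n!}\sum_{[H]}|H|,
\end{equation*}
the sum running over conjugacy classes of maximal primitive $H\not\geq A_n$. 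By Babai's classification-free bound on the order of a primitive group that is neither $A_n$ nor $S_n$, every such $H$ satisfies $|H|\leq\exp\!\big(\O(\sqrt n(\log n)^2)\big)$, which is negligible beside $n!$. The entire difficulty is therefore to bound the \emph{number} of conjugacy classes of such $H$ without the Classification.

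I would split the maximal primitive subgroups by (classification-free) O'Nan--Scott type. For the affine, diagonal and twisted-wreath types the subgroup is determined by a prime-power identity $n=p^d$, or by an identity $n=|T|^{r}$ or $n=|T|^{r-1}$ together with a simple group $T$ of order at most $n$; since the elementary bound gives at most $n^{\O((\log n)^2)}$ groups of order at most $n$, there are only quasi-polynomially many such data. For the product-action type one has $H\leq G\wr S_r$ with $n=m^r$, $r\geq2$, and $G$ primitive of degree $m\leq\sqrt n$; even the crude bound that $S_m$ has at most $\exp\!\big(\O(m^{3/2}(\log m)^{3})\big)$ primitive subgroups gives at most $\exp\!\big(\O(n^{3/4}(\log n)^3)\big)$ classes here. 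In every one of these cases the number of classes is $\exp(o(n\log n))$, so after multiplying by the order bound their total contribution to $\sum_{[H]}|H|$ is $\exp(o(n\log n))$, i.e.\ at most $n!\cdot n^{-\omega(1)}$, which is far below the target.

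The case that resists this is that of almost simple primitive subgroups of degree exactly $n$: there the crude count of simple groups of order up to $\exp\!\big(\O(\sqrt n(\log n)^2)\big)$ already exceeds $n!$, so no classification-free enumeration of conjugacy classes can succeed. For these I would not union bound over subgroups but estimate directly the number of pairs lying in a common such $H$, using character theory. If $\pi$ lies in a primitive $H\not\geq A_n$ then, by Jordan's theorem, no power of $\pi$ is a $p$-cycle for a prime $p\leq n-3$; by Babai's lower bound on the minimal degree, every nontrivial power of $\pi$ moves at least $n^{1/2-o(1)}$ points; and $\langle\pi,\sigma\rangle$ is transitive of order at most $\exp\!\big(\O(\sqrt n(\log n)^2)\big)$. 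The same holds for $\sigma$. One restricts to permutations obeying these necessary cycle-type constraints, writes the number of such pairs that generate a transitive group of bounded order as a sum over $\Irr(S_n)$, and bounds it using the character estimates of Schlage-Puchta~\cite{Sc1}: pointwise bounds on $|\chi(\pi)|$ for permutations with the restricted cycle types, together with bounds for the moments $\sum_{\chi}\chi(1)^{-s}$.

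The main obstacle is exactly this final step. For a single permutation the restrictions coming from Jordan's theorem and the minimal-degree bound reduce its probability only by a factor $1/(\log n)^{\O(1)}$, so the gain of $n^{-2+o(1)}$ cannot be obtained from either permutation separately; it must be extracted from the interaction of the two permutations with the transitivity and small-order constraints simultaneously, and making the character sum sharp enough to do this, using only classification-free inputs, is the heart of the argument. Granting that the direct character estimate bounds the almost simple contribution to $P_2(n)$ by $n^{-2+o(1)}$, and combining with the negligible contribution of the remaining types, we conclude $P_2(n)\leq n^{-2+o(1)}$.
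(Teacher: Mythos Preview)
Your proposal has a genuine gap: the almost simple case is not a residual case but essentially the entire theorem, and you do not prove it. Your own write-up acknowledges this (``Granting that the direct character estimate bounds the almost simple contribution\dots''), and the preceding sketch --- write the number of pairs generating a transitive group of bounded order as a character sum and apply the bounds of Schlage-Puchta --- is too vague to constitute an argument. In particular, the constraints you list on $\pi$ alone (no prime power is a $p$-cycle, large minimal degree of $\langle\pi\rangle$) only cut probability by polylogarithmic factors, and it is not at all clear how to encode the condition ``$\langle\pi,\sigma\rangle$ is transitive of order at most $\exp(\O(\sqrt n(\log n)^2))$'' as a tractable character sum. The union bound over non--almost-simple types, even if carried out, contributes nothing toward the target.

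The paper's proof takes a completely different route that avoids any O'Nan--Scott case split. One does not try to enumerate or classify the primitive $H$ at all; instead one shows directly that for random $(\pi,\sigma)$ the subgroup $\langle\pi,\sigma\rangle$ contains a $p$-cycle (for some prime $n/2<p<3n/5$) with probability $1-n^{-2+o(1)}$, and then Jordan's theorem (Lemma~\ref{jordan-p-cycle}) rules out every primitive $H\not\geq A_n$ simultaneously. The mechanism is a second moment argument borrowed from Schlage-Puchta: with $\sigma$ restricted to $\fM$ (using Lemma~\ref{babai-min-degree}), the $N$ elements $\pi,\pi\sigma,\dots,\pi\sigma^{N-1}$ are approximately pairwise independent, so by Chebyshev one of them lies in $\fC$ with high probability. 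The variance is controlled via Lemma~\ref{convolution-lemma}, the Murnaghan--Nakayama rule, the hook formula (to get $\chi(1)\geq e^{n/4}$ for the relevant $\chi$), the M\"uller--Schlage-Puchta character bound (Lemma~\ref{character-bound}), and a count of pairs $(\nu,\sigma)$ with $\sigma^\nu=1$. Taking $N=n^2$ gives $P_2(n)\leq n^{-2+o(1)}$. The key idea you are missing is precisely this: look at the \emph{words} $\pi\sigma^i$ rather than at $\pi$ and $\sigma$ separately.
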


Let us briefly outline the proof. The main insight, borrowed from Schlage-Puchta~\cite{Sc1}, is that if $\pi$ and $\sigma$ are random then the $N$ elements $\pi,\pi\sigma,\dots,\pi\sigma^{N-1}$ are approximately pairwise independent. Thus we can use the second moment method to show that there is some $i$ such that $\pi\sigma^i \in \fC$, where
\[
  \fC = \{\pi\in S_n: \exists ~p\in\Pi_n\text{ such that }\pi\text { contains a } p\text{-cycle}\},
\]
and $\Pi_n$ is the set of all primes $p$ in the range $n/2 < p < 3n/5$. Some power of $\pi\sigma^i$ is then a $p$-cycle, and we can apply the following classical result of C.~Jordan (see \cite[Theorem 3.3E]{Di1} or \cite[Theorem 13.9]{Wi2}).

\begin{lem}
\label{jordan-p-cycle}
Let $H$ be a primitive subgroup of $S_n$. Suppose that $H$ contains at least one permutation which is a $p$-cycle for a prime $p\leq n-3$. Then either $H=S_n$ or $H=A_n$.
\end{lem}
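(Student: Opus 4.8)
The plan is to prove the lemma by the classical method of \emph{Jordan sets}, which needs no classification input. Write $\Omega=\{1,\dots,n\}$, and let $\pi\in H$ be the given $p$-cycle, with support $\Sigma$ of size $p$ and fixed-point set $F=\Omega\setminus\Sigma$ of size $n-p\geq 3$. The starting observation is that $\Sigma$ is a Jordan set: the cyclic group $\langle\pi\rangle$ fixes $F$ pointwise and, having prime order, acts transitively (indeed regularly, hence primitively) on $\Sigma$, so $\langle\pi\rangle\leq H_{(F)}$ already witnesses that the pointwise stabiliser $H_{(\Omega\setminus\Sigma)}$ is transitive on $\Sigma$. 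The aim is to bootstrap from this single Jordan set, using primitivity of $H$, until we produce a Jordan set whose associated Jordan group contains the full alternating group on it; the lemma then follows at once.

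Two engines drive the argument. The first is a gluing principle: if $\Gamma_1,\Gamma_2$ are Jordan sets with $\Gamma_1\cap\Gamma_2\neq\emptyset$, then $\Gamma_1\cup\Gamma_2$ is again a Jordan set, and its Jordan group $H_{(\Omega\setminus(\Gamma_1\cup\Gamma_2))}$ contains both $H_{(\Omega\setminus\Gamma_1)}$ and $H_{(\Omega\setminus\Gamma_2)}$. One checks this by noting that a common point of $\Gamma_1\cap\Gamma_2$ is moved across all of $\Gamma_1\cup\Gamma_2$ by these two subgroups. Thus overlapping translates $\Sigma^g$ ($g\in H$) may be merged both to enlarge the Jordan set and to enrich its Jordan group, which is generated by the corresponding $p$-cycles. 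The second engine is that primitivity supplies the required overlaps: a proper nonempty $\Delta\subsetneq\Omega$ is a block of imprimitivity iff $\Delta^g\in\{\Delta\}$ or $\Delta^g\cap\Delta=\emptyset$ for all $g\in H$, so since $H$ is primitive and $2\leq|\Sigma|<n$, some translate $\Sigma^g$ meets $\Sigma$ without equalling it. The same reasoning applies to each larger Jordan set we build, which prevents the enlargement process from stalling short of $\Omega$.

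The technical heart, and the step I expect to be the main obstacle, is to control these overlaps well enough that the Jordan group actually grows to contain a $3$-cycle, and hence (acting primitively on the support it has accumulated) the alternating group on that support. Concretely one wants to feed in translates so that two prime cycles with suitably overlapping supports are combined: for overlap in all but one point, say, the subgroup generated is transitive on the union of supports and contains a short cycle, from which the alternating group on the union follows by a classical computation. Iterating the gluing while using primitivity to force the supports to keep spreading, one reaches a Jordan set $\Gamma$ with $\Omega\setminus\Gamma=\{\alpha\}$ a single point whose Jordan group $H_\alpha$ contains $\mathrm{Alt}(\Omega\setminus\{\alpha\})$. Arranging the overlaps to have the right sizes is precisely where the hypothesis of at least three fixed points is used: it provides the room for the supports to overlap non-degenerately throughout, and without it the conclusion genuinely fails (for instance for various Mathieu and projective groups), so no shortcut through a naive point-stabiliser induction is available.

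The endgame is then immediate. Suppose $H_\alpha\supseteq\mathrm{Alt}(\Omega\setminus\{\alpha\})$. Since $H$ is transitive, conjugating gives $\mathrm{Alt}(\Omega\setminus\{\beta\})\leq H$ for every $\beta\in\Omega$, and for $n\geq 5$ any two such subgroups, for distinct deleted points, already generate $\mathrm{Alt}(\Omega)$. Hence $H\supseteq A_n$, so $H=A_n$ or $H=S_n$, as claimed.
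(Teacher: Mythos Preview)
The paper does not prove this lemma; it quotes it as a classical result of Jordan with references to Dixon--Mortimer (Theorem~3.3E) and Wielandt (Theorem~13.9). Your Jordan-set framework is exactly the one those sources use, and your gluing principle and endgame are both correct.

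The gap is precisely where you predict it. You assert that two $p$-cycles whose supports overlap in $p-1$ points generate a subgroup that ``contains a short cycle, from which the alternating group on the union follows by a classical computation.'' This is false. Identify the $p+1$ points of the union with the projective line over $\mathbb{F}_p$ and take the unipotents $z\mapsto z+1$ and $z\mapsto z/(z+1)$: each is a $p$-cycle, their supports meet in exactly $p-1$ points, and together they generate $\mathrm{PSL}(2,p)$, which for $p\geq 5$ contains no $3$-cycle---indeed every nonidentity element has at most two fixed points, so there is no nontrivial cycle of length below $p-1$. Primitivity of $H$ only supplies \emph{some} overlapping conjugate $\pi^g$, with no control over which, so nothing prevents $\langle\pi,\pi^g\rangle$ from being such a linear group sitting inside $H$. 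The arguments in the cited references avoid this local trap by working globally: one proves a shrinking lemma to the effect that a primitive Jordan set of complement size $k\geq 2$ yields one of complement size $k-1$ whose Jordan group is $2$-transitive (hence again primitive). Iterating from $k=n-p$ shows that $H$ is $(n-p+1)$-transitive and that each $H_\alpha$ (for $\alpha\in F$) is itself primitive on $\Omega\setminus\{\alpha\}$ while still containing $\pi$; an induction on $n$, which is where the hypothesis $n-p\geq 3$ is actually consumed, then delivers $H_\alpha\geq\mathrm{Alt}(\Omega\setminus\{\alpha\})$, after which your endgame applies verbatim.
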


\noindent
Additionally, as something of technical trick, we will use the concept of minimal degree. Recall that the \emph{minimal degree} $m(H)$ of a non-trivial subgroup $H\leq S_n$ is the minimal number of points moved by a non-identity element of $H$. The following bound is due to Babai (see \cite[Theorem~5.3A and Theorem~5.4A]{Di1}).

\begin{lem}
\label{babai-min-degree}
Let $H<S_n$ be a primitive permutation group not containing $A_n$. Then $m(H)>\sqrt{n}/2$.
\end{lem}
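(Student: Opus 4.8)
The plan is to run a second moment argument on the statistic
\[
  X = \#\{0\le i<N : \pi\sigma^i\in\fC\},
\]
for a parameter $N\asymp n^{2}\log n$ to be fixed at the end. The first step is a reduction: if $\pi,\sigma\in H$ for a primitive $H\not\geq A_n$, then every $\pi\sigma^i$ lies in $H$; and if some $\pi\sigma^i\in\fC$ then, since $p>n/2$ forces the $p$-cycle to be unique, raising $\pi\sigma^i$ to the power equal to the lcm of its remaining cycle lengths (which is coprime to $p$) produces a genuine $p$-cycle lying in $H$, whence Lemma~\ref{jordan-p-cycle} gives $H\geq A_n$, a contradiction. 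Thus the event defining $P_2(n)$ is contained in $\{X=0\}$, and I would aim to bound $P(X=0)$ on the relevant part of the space.

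Second, I would compute the first moment. Since $\pi\sigma^i$ is uniformly distributed for each fixed $i$, we get $\E X=Nq$ with $q=P(\tau\in\fC)=\sum_{p\in\Pi_n}1/p$, and by Mertens' theorem $q\asymp 1/\log n$. For the second moment the key is the identity, valid for fixed $\sigma$ and the class function $\1_\fC$,
\[
  \E_\pi\big[\1_\fC(\pi\sigma^i)\,\1_\fC(\pi\sigma^j)\big]-q^2
    =\sum_{\chi\neq 1}\widehat{\1_\fC}(\chi)^2\,\frac{\chi(\sigma^{\,j-i})}{\chi(1)},
\]
which expresses the covariance purely through the Fourier coefficients $\widehat{\1_\fC}(\chi)=\langle\1_\fC,\chi\rangle$ and the normalised character values at powers of $\sigma$. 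Two structural inputs make this manageable: the coefficients are small for low-dimensional $\chi$ (a direct count shows $\widehat{\1_\fC}(\chi)=0$ for the standard character, since a large prime cycle is uncorrelated with the number of fixed points), while $\sum_\chi\widehat{\1_\fC}(\chi)^2=q$ by Parseval; and the ratios $\chi(\sigma^k)/\chi(1)$ are small whenever $\sigma^k$ is far from the identity. This is exactly the regime controlled by the character estimates of Schlage-Puchta~\cite{Sc1}, which I would invoke to bound the covariances, then sum over $1\le k\le N-1$, integrate over $\sigma$, and apply $P(X=0)\le\Var(X)/(\E X)^2$.

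The delicate point, and the place where I expect the main obstacle, is the non-uniformity in $\sigma$: the values $\chi(\sigma^k)/\chi(1)$ blow up precisely when some power $\sigma^k\neq 1$ collapses onto a small support, or when $\sigma$ has small order, and these $\sigma$ restrict the naive covariance bound to the weaker estimate $n^{-1+o(1)}$. This is where the minimal degree enters as a technical device: on the bad event $\sigma^k\in H$ for a primitive $H\not\geq A_n$, so by Lemma~\ref{babai-min-degree} every non-identity power $\sigma^k$ moves more than $\sqrt n/2$ points. I would use this to excise the worst conspiracies---powers of $\sigma$ with large fixed-point sets---so that on the event we actually need to bound, the character estimates apply with room to spare; the small exceptional family of $\sigma$ (very small order or anomalous cycle structure) would be treated separately, with the minimal-degree bound used to control the sizes of the groups involved.

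Finally, choosing $N\asymp n^{2}\log n$ balances the main term $1/(\E X)\asymp\log n/N$ of the second-moment bound against the accumulated covariance contribution, yielding $P(X=0)\le n^{-2+o(1)}$ on the relevant part of the space and hence $P_2(n)\le n^{-2+o(1)}$. I expect essentially all of the difficulty to be concentrated in making the approximate pairwise independence \emph{quantitative and uniform in} $\sigma$: the character-theoretic covariance bound, once the atypical $\sigma$ are removed via the minimal-degree trick, must be strong enough to beat the target exponent $-2$ rather than merely $-1$.
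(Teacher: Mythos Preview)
Your proposal does not address the stated lemma. Lemma~\ref{babai-min-degree} asserts a lower bound on the minimal degree of a primitive subgroup $H<S_n$ not containing $A_n$; it is a structural fact about primitive permutation groups, due to Babai, and in the paper it is not proved but simply quoted from \cite[Theorems~5.3A and~5.4A]{Di1}. What you have written is instead a sketch of the proof of the main theorem $P_2(n)\leq n^{-2+o(1)}$, in which Lemma~\ref{babai-min-degree} appears as an \emph{input}. Indeed, you explicitly invoke the lemma in your third paragraph (``by Lemma~\ref{babai-min-degree} every non-identity power $\sigma^k$ moves more than $\sqrt n/2$ points'') rather than establishing it.

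To prove the lemma itself one needs a combinatorial or group-theoretic argument bounding the support of a non-identity element in a primitive group, and nothing of the sort appears in your outline: the second moment machinery, character estimates, and the set $\fC$ are all irrelevant to the claim at hand. If the intent was to prove the $P_2(n)$ bound, your sketch is broadly in line with the paper's Section~3, but as a proof of Lemma~\ref{babai-min-degree} it is vacuous and circular.
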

\noindent
This Lemma allows us to restrict $\sigma$ to the set
\[
  \fM = \{\sigma \in S_n : m(\langle \sigma\rangle) > \sqrt{n}/2\},
\]
which slightly boosts the approximate pairwise independence of $\pi,\pi\sigma,\dots,\pi\sigma^{N-1}$. (A bound of the form $m(H) > c\sqrt{n}/\log n$ due to Jordan would also suffice for us.)
\noindent
To bound the variance in the second moment method we use character theory, and thus the proof comes down to a certain bound in terms of characters. Finally, we apply a character bound due to Müller and Schlage-Puchta \cite{Mu1} to conclude.

We can use basically the same method to bound
\[
  P_3(n) = P(\{(\pi,\sigma,\tau)\in S_n^3 : \pi, \sigma,\tau\in H~\text{for some primitive}~H\not\geq A_n\}),
\]
and in this case we have significantly more leverage as we can consider the collection of all words of the form $\pi w(\sigma,\tau)$ with $w$ a short word in two letters. Again we have approximate pairwise independence, so again we can use the second moment method. This idea leads to the following bound.

\begin{thm}
$P_3(n) \leq \exp(-cn^{1/3})$ as $n\to\infty$.
\end{thm}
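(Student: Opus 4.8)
The plan is to mimic the proof of $P_2(n)\le n^{-2+o(1)}$, replacing the polynomially many elements $\pi,\pi\sigma,\dots,\pi\sigma^{N-1}$ by the exponentially many elements $\pi w(\sigma,\tau)$, where $w$ ranges over a set $W$ of short words in two letters. Fix a small constant $c_0>0$, put $\ell=\floor{c_0 n^{1/3}}$, and let $W$ be the set of reduced words of length at most $\ell$ in the free group on two generators, so $|W|\asymp 3^{\ell}$. Suppose $\pi,\sigma,\tau$ all lie in a common primitive $H\not\geq A_n$. By Lemma~\ref{babai-min-degree} every non-identity element of $H$ moves more than $\sqrt n/2$ points, so $\sigma,\tau\in\fM$. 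Moreover every $p\in\Pi_n$ satisfies $p\le 3n/5\le n-3$ for $n$ large, so by Lemma~\ref{jordan-p-cycle} the group $H$ contains no permutation having a cycle of length $p$ with $p\in\Pi_n$ (since $p>n/2$, a suitable power of such a permutation is a $p$-cycle, which would force $H\geq A_n$). As $\pi w(\sigma,\tau)\in H$ for every $w\in W$, this gives $\pi w(\sigma,\tau)\notin\fC$ for all $w\in W$, and hence
\[
  P_3(n)\le P\bigl(\sigma,\tau\in\fM\text{ and }\pi w(\sigma,\tau)\notin\fC\text{ for all }w\in W\bigr).
\]

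Next I would apply the second moment method to $X=\sum_{w\in W}\1_{\fC}(\pi w(\sigma,\tau))$, working conditionally on $\sigma,\tau\in\fM$ (the loss from this conditioning is controlled exactly as in the $P_2(n)$ argument, $\sigma$ and $\tau$ being independent). For fixed $\sigma,\tau$ and $w$ the map $\pi\mapsto\pi w(\sigma,\tau)$ is a bijection of $S_n$, so $\E_\pi[\1_\fC(\pi w(\sigma,\tau))]=p_0:=|\fC|/n!$, giving $\E[X\mid\sigma,\tau\in\fM]=|W|\,p_0$. Since a permutation has at most one cycle of length exceeding $n/2$, the events ``$\pi$ has a $p$-cycle'' ($p\in\Pi_n$) are pairwise disjoint, so $p_0=\sum_{p\in\Pi_n}1/p$, which by Mertens' theorem is $\asymp 1/\log n$; thus $\E[X\mid\sigma,\tau\in\fM]\asymp 3^{\ell}/\log n$. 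By Chebyshev's inequality it then suffices to show that the off-diagonal part of the conditional second moment of $X$ is at most $\exp(-cn^{1/3})\bigl(\E[X\mid\sigma,\tau\in\fM]\bigr)^2$, the diagonal part being $\asymp\E[X\mid\sigma,\tau\in\fM]\ge\exp(cn^{1/3})$.

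Since $\fC$ is a union of conjugacy classes, $\1_\fC$ is a class function; write $\1_\fC=p_0+g$ with $g=\sum_{1\ne\chi\in\Irr(S_n)}a_\chi\chi$ and $\sum_\chi a_\chi^2=p_0(1-p_0)$. For $w_1,w_2\in W$, expanding in irreducible characters gives
\[
  \E_\pi\bigl[g(\pi w_1)\,g(\pi w_2)\bigr]=\sum_{1\ne\chi}\frac{a_\chi^2}{\chi(1)}\,\chi\bigl(u(\sigma,\tau)\bigr),
\]
where $u$ is the reduced form of $w_1^{-1}w_2$, a reduced word of length at most $2\ell$ that is non-empty when $w_1\ne w_2$. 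Averaging over $\sigma,\tau\in\fM$, summing over the $\asymp 3^{2\ell}$ pairs, and using $\sum_\chi a_\chi^2\le p_0$, the argument reduces to the character estimate
\[
  \Bigl|\E_{\sigma,\tau\in\fM}\bigl[\chi(u(\sigma,\tau))/\chi(1)\bigr]\Bigr|\le\exp(-cn^{1/3})
  \qquad(1\ne\chi\in\Irr(S_n))
\]
for every non-empty reduced word $u$ of length at most $2\ell$. To prove this I would invoke the character bound of M\"uller--Schlage-Puchta~\cite{Mu1}, which controls $|\chi(h)/\chi(1)|$ in terms of the number of points moved by $h$; the point of restricting to $\fM$ is that it prevents powers and syllables of $u(\sigma,\tau)$ from collapsing into a cyclic (or otherwise small-degree) subgroup, so each fresh occurrence of $\sigma^{\pm1}$ or $\tau^{\pm1}$ in $u$ contributes genuine contraction. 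Combined with the convergence of $\sum_{\chi}\chi(1)^{-s}$ for $s$ above a fixed threshold and with the approximate independence of the successive syllables for random $\sigma,\tau$ (as exploited by Schlage-Puchta~\cite{Sc1}), this should yield decay exponential in the syllable length of $u$, hence in $|u|$, as long as $|u|\lesssim n^{1/3}$; taking $c_0$ small enough relative to the implied constants then closes everything up.

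The hard part, just as in the treatment of $P_2(n)$ but amplified, will be this last character estimate: one must bound $\E_{\sigma,\tau\in\fM}[\chi(u(\sigma,\tau))]$ uniformly over all non-trivial $\chi\in\Irr(S_n)$ and all reduced words $u$ of length up to a constant times $n^{1/3}$, and in particular handle the degenerate words -- those $u$ that are proper powers (which must be traded for shorter words at a controlled cost) and the event $w_1(\sigma,\tau)=w_2(\sigma,\tau)$ for distinct short words $w_1,w_2$ (whose probability must be bounded separately, and should again be exponentially small). The exponent $1/3$ is dictated by balance: the exponential gain $\exp(-c|u|)$ from the word length must outweigh the combined losses from summing over the $\exp(O(\sqrt n))$ irreducible characters of $S_n$ and over the $\asymp 3^{2\ell}$ pairs of words, and $\ell\asymp n^{1/3}$ is the largest scale for which this holds with the character estimates currently available without CFSG.
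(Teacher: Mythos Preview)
Your overall architecture---second moment method applied to $X=\sum_{w\in W}\1_\fC(\pi w(\sigma,\tau))$ with $|W|\asymp 3^{\ell}$, expansion of the off-diagonal terms via irreducible characters, and a separate treatment of the event $w_1(\sigma,\tau)=w_2(\sigma,\tau)$---matches the paper. But two pieces of the execution are off, and they are precisely the pieces you flag as ``the hard part''.

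First, conditioning on $\sigma,\tau\in\fM$ individually is not enough. Membership in $\fM$ only controls the minimal degree of the \emph{cyclic} groups $\langle\sigma\rangle$ and $\langle\tau\rangle$; it says nothing about the number of fixed points of a mixed word like $\sigma\tau\sigma^{-1}\tau^{-1}$, so the M\"uller--Schlage-Puchta bound cannot be applied to $\chi(u(\sigma,\tau))$. The paper instead conditions on
\[
  \fM_2=\{(\sigma,\tau)\in S_n^2:\ m(\langle\sigma,\tau\rangle)>\sqrt{n}/2\},
\]
which, by Lemma~\ref{babai-min-degree}, still contains every pair lying in a primitive $H\not\geq A_n$. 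With this conditioning, whenever $u(\sigma,\tau)\neq 1$ it automatically moves more than $\sqrt{n}/2$ points, and a \emph{single} application of Lemma~\ref{character-bound} together with Lemma~\ref{dim-chi} (which gives $\chi(1)\geq\exp(n/4)$ for every $\chi$ with $\langle\chi,\1_\fC\rangle\neq 0$) yields
\[
  \max_{\substack{(\sigma,\tau)\in\fM_2\\ u(\sigma,\tau)\neq 1}}\frac{|\chi(u(\sigma,\tau))|}{\chi(1)}\ \leq\ \exp\bigl(-c\,n^{1/2}/\log n\bigr),
\]
uniformly in $u$. No syllable-by-syllable iteration, no Witten zeta function, and no dependence on $|u|$ is needed here.

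Second, you have misidentified the bottleneck. The exponent $1/3$ does \emph{not} come from the character estimate (which, as above, gives the stronger $\exp(-cn^{1/2}/\log n)$), nor from summing over $\exp(O(\sqrt{n}))$ characters (the sum $\sum_\chi\langle\chi,\1_\fC\rangle^2=|\fC|/n!$ handles that). It comes from bounding the probability that a short word law is satisfied: for a non-trivial reduced word $w$ of length at most $k\leq\sqrt{n/2}$ one has
\[
  P\bigl(w(\sigma,\tau)=1\bigr)\ \leq\ \exp(-cn/k^2)
\]
for uniformly random $\sigma,\tau\in S_n$ (see \cite[Lemma~2.2]{Eb2}). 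Requiring $n/k^2\geq n^{1/3}$ forces $k\lesssim n^{1/3}$, and this is exactly why one takes $\ell\asymp n^{1/3}$. Your proposal mentions this event only in passing and without the quantitative decay in $|w|$, but it is the entire reason for the shape of the final bound.
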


\noindent
By combining this with \cite[Section~4]{Di3} we have
\[
  P(\langle \pi,\sigma,\tau\rangle\geq A_n) = 1 - \frac1{n^2} - \frac{3}{n^4} - \frac{6}{n^5} + \O(n^{-6})
\]
(and more terms can be mechanically computed).

Finally, we should mention
\[
  P_1(n) = P(\{\pi \in S_n : \pi\in H~\text{for some primitive}~H\not\geq A_n\}).
\]
On CFSG it is known that $P_1(n) \leq n^{-1 + o(1)}$ (see \cite[Theorem~1.3]{Eb1}), and this is the best possible bound which depends only on the crude size of $n$. The best CFSG-free bound is still $P_1(n) \leq |\fM|/n! = n^{-1/2 + o(1)}$ due to Bovey~\cite{Bo2}.

\section{Some character theory}

In this section we review some results from character theory which are essential for our proof. We denote by $\Irr(S_n)$ the set of irreducible characters of $S_n$. For a conjugacy class $C$ of $S_n$ and $\chi\in\Irr(S_n)$ we write $\chi(C)$ to denote $\chi(\pi)$ for $\pi\in C$. We write $\langle\cdot,\cdot\rangle$ for the usual inner product on the space $\mathbb{C}^{S_n}$, i.e.,
\[
  \langle f, g\rangle = \frac1{n!} \sum_{\pi \in S_n} f(\pi)\overline{g(\pi)}.
\]

\begin{lem}\label{convolution-lemma}
Let $C_1$ and $C_2$ be conjugacy classes of $S_n$ and let $\tau\in S_n$. Then
\begin{equation*}
\#\{(x,y)\in C_1\times C_2:~xy=\tau\}=\frac{|C_1||C_2|}{n!}\sum_{\chi\in \operatorname{Irr}(S_n)}\frac{\chi(C_1)\chi(C_2)\chi(\tau^{-1})}{\chi (1)}.
\end{equation*}
Thus if $C_1$, $C_2$, and $C_3$ are conjugation-invariant subsets of $S_n$ we have
\[
  \#\{(x,y) \in C_1 \times C_2 : xy \in C_3\} = n!^2 \sum_{\chi\in\Irr(S_n)} \frac{\langle \chi, \1_{C_1}\rangle \langle \chi, \1_{C_2}\rangle \langle \chi, \1_{C_3}\rangle}{\chi(1)}.
\]
\end{lem}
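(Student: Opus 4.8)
The plan is to invoke the classical machinery of class sums in the group algebra $\mathbb{C}[S_n]$. For a conjugacy class $C$ of $S_n$ write $\widehat{C}=\sum_{x\in C}x\in\mathbb{C}[S_n]$; these class sums form a $\mathbb{C}$-basis of the centre $Z(\mathbb{C}[S_n])$. For $\chi\in\Irr(S_n)$, afforded by a representation $\rho$, Schur's lemma forces $\rho(\widehat{C})$ to be a scalar matrix, and taking traces identifies this scalar as $\omega_\chi(\widehat{C}):=|C|\chi(C)/\chi(1)$; the resulting map $\omega_\chi\colon Z(\mathbb{C}[S_n])\to\mathbb{C}$ is an algebra homomorphism. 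I would also use the primitive central idempotents $e_\chi=\tfrac{\chi(1)}{n!}\sum_{g\in S_n}\chi(g^{-1})\,g$, which satisfy $\sum_\chi e_\chi=1$ and $z=\sum_\chi\omega_\chi(z)\,e_\chi$ for every central $z$.

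First I would prove the product formula. Since $\widehat{C_1}\widehat{C_2}$ is central and $\omega_\chi$ is multiplicative,
\[
  \widehat{C_1}\widehat{C_2}=\sum_{\chi\in\Irr(S_n)}\omega_\chi(\widehat{C_1})\,\omega_\chi(\widehat{C_2})\,e_\chi=\sum_\chi\frac{|C_1|\,|C_2|\,\chi(C_1)\chi(C_2)}{\chi(1)^2}\,e_\chi.
\]
On the other hand, directly from the definition, $\widehat{C_1}\widehat{C_2}=\sum_{\tau\in S_n}\#\{(x,y)\in C_1\times C_2:xy=\tau\}\,\tau$. Comparing the coefficients of $\tau$ on the two sides, and using that the coefficient of $\tau$ in $e_\chi$ is $\tfrac{\chi(1)}{n!}\chi(\tau^{-1})$, yields the first displayed identity. (Every character of $S_n$ is real-valued, even integer-valued, so $\chi(\tau^{-1})$ may be replaced by $\chi(\tau)$; we keep $\chi(\tau^{-1})$ only to match the customary form of Frobenius's formula. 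Alternatively one can bypass the group algebra, starting from $\sum_\chi\chi(1)\chi(g)=n!\,\1[g=1]$ together with the identity $\sum_{x\in C}\chi(xg)=\frac{|C|\chi(C)}{\chi(1)}\chi(g)$, which is itself a restatement of the scalar action of $\widehat{C}$.)

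For the second identity I would decompose each conjugation-invariant set into its conjugacy classes, $C_i=\bigsqcup_j D_{i,j}$, and sum the first formula over all triples $(D_{1,j_1},D_{2,j_2},D_{3,j_3})$. The count $\#\{(x,y)\in D_1\times D_2:xy=\tau\}$ is constant on each conjugacy class — conjugating a solution by $g$ yields a solution with $\tau$ replaced by $g\tau g^{-1}$ — so summing it over $\tau\in D_3$ simply multiplies it by $|D_3|$. Collecting the three class sums and again using $\chi(\tau^{-1})=\chi(\tau)$, the total becomes
\[
  \frac1{n!}\sum_\chi\frac1{\chi(1)}\Bigl(\sum_{\pi\in C_1}\chi(\pi)\Bigr)\Bigl(\sum_{\pi\in C_2}\chi(\pi)\Bigr)\Bigl(\sum_{\pi\in C_3}\chi(\pi)\Bigr).
\]
Finally, $\sum_{\pi\in C}\chi(\pi)=n!\,\langle\chi,\1_C\rangle$ (immediate from the definition of the inner product, as $\1_C$ is real-valued), which converts the previous display into the asserted expression with prefactor $n!^2$.

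I do not anticipate any genuine obstacle: the statement is a routine consequence of elementary character theory. The only points that require a little care are keeping track of inverses versus complex conjugates — harmless here since the characters of $S_n$ are rational integers — and the passage from a single triple of conjugacy classes to arbitrary conjugation-invariant subsets, which is nothing more than the trilinearity of both sides in $\1_{C_1}$, $\1_{C_2}$, $\1_{C_3}$.
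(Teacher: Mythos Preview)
Your proof is correct and follows essentially the same approach as the paper: the paper simply cites standard references (Curtis--Reiner or Kerber) for the first Frobenius-type formula and then obtains the second by partitioning $C_1$ and $C_2$ into conjugacy classes and summing, exactly as you do. The only difference is that you supply the standard group-algebra/central-idempotent proof of the first formula rather than citing it, which is fine and entirely routine.
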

\begin{proof} For the first equation, see \cite[Proposition 9.33]{Cu1} or \cite[Theorem 6.3.1]{Ke1}. The second equation follows from partitioning $C_1$ and $C_2$ into conjugacy classes and adding.
\end{proof}

Recall that the irreducible characters of $S_n$ are explicitly parameterized by \emph{partitions} of $n$, or sequences $\lambda = (\lambda_1,\lambda_2,\dots,\lambda_l)$, where $\lambda_1 \geq \cdots \geq \lambda_l$ are positive integers such that $\lambda_1 + \cdots + \lambda_l = n$. We write $\lambda \vdash n$ to indicate that $\lambda$ is a partition of $n$, and we write $\chi^\lambda$ for the irreducible character of $S_n$ corresponding to $\lambda$. The \emph{Ferrers diagram} of $\lambda$ is an array of $n$ boxes having $l$ left-justified rows with row $i$ containing $\lambda_i$ boxes for $1\leq i\leq l$. We write $(i,j)\in\lambda$ to indicate that $(i,j)$ is a box in row $i$ and column $j$ in the Ferrers diagram of $\lambda$.

We shall apply the \emph{Murnaghan--Nakayama rule.} 

\begin{defn}
Let $\lambda\vdash n$ be a partition. A \emph{rim hook} $h$ is an edgewise connected part of the Ferrers diagram of $\lambda$, obtained by starting from a box at the right end of a row and at each step moving downwards or leftwards only, which can be removed to leave a proper Ferrers diagram denoted by $\lambda\backslash h$. An $r$\emph{-rim hook} is a rim hook containing $r$ boxes.\\
The \emph{leg length} of a rim hook $h$ is
\begin{equation*}
ll(h):=(\text{the number of rows of }h)-1.
\end{equation*}
Let $\pi\in S_n$ be a permutation with cycle type $(1^{\alpha_1},\ldots,q^{\alpha_q},\ldots,n^{\alpha_n})$ and $\alpha_q\geq 1$. Denote $\pi\backslash q\in S_{n-q}$ a permutation with cycle type $(1^{\alpha_1},\ldots,q^{\alpha_q-1},\ldots,(n-q)^{\alpha_{n-q}})$.
\end{defn}

\begin{lem}[Murnaghan--Nakayama Rule]
\label{1-4-2}
Let $\lambda\vdash n$ be a partition. Suppose that $\pi\in S_n$ is a permutation which contains a $q$-cycle. Then we have
\begin{equation*}
\chi^{\lambda}(\pi)=\sum_{\substack{h\\q\text{-rim hook}\\\text{of }\lambda}}(-1)^{ll(h)}\chi^{\lambda\backslash h}(\pi\backslash q).
\end{equation*}
\end{lem}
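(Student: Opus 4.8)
The plan is to deduce the stated $q$-cycle version of the Murnaghan–Nakayama rule from the classical one-step version, in which one removes a rim hook corresponding to the *first* part of the cycle type written in a chosen order. Recall that the full Murnaghan–Nakayama rule computes $\chi^\lambda(\pi)$ by summing $(-1)^{\mathrm{ll}(h)}\chi^{\lambda\setminus h}(\pi\setminus q)$ over $q$-rim hooks $h$, where $q$ is the length of *whichever* cycle we choose to strip off first; the point is that the answer does not depend on the order in which the cycles of $\pi$ are processed, since $\chi^\lambda(\pi)$ is a well-defined number. So the content of the lemma is essentially a restatement of the standard rule together with this order-independence.

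First I would set up the combinatorial apparatus: the Jacobi–Trudi / Frobenius formula for $\chi^\lambda$, or equivalently the expansion of the power-sum symmetric function $p_\mu$ in the Schur basis, $p_\mu = \sum_{\lambda} \chi^\lambda(\mu)\, s_\lambda$. Multiplication by a single power sum $p_q$ acts on Schur functions by the Pieri-type rule $p_q \cdot s_\nu = \sum_h (-1)^{\mathrm{ll}(h)} s_{\nu \cup h}$, the sum being over ways to add a $q$-rim hook to $\nu$; this is exactly the generating-function shadow of rim-hook removal. Writing the cycle type of $\pi$ as $\mu = (q,\mu')$ where $\mu'$ is the cycle type of $\pi\setminus q$, we get
\[
  \sum_{\lambda\vdash n}\chi^\lambda(\pi)\, s_\lambda \;=\; p_q\cdot p_{\mu'} \;=\; p_q\cdot \sum_{\nu\vdash n-q}\chi^\nu(\pi\setminus q)\, s_\nu \;=\; \sum_{\nu\vdash n-q}\chi^\nu(\pi\setminus q)\sum_{h}(-1)^{\mathrm{ll}(h)} s_{\nu\cup h}.
\]
Extracting the coefficient of $s_\lambda$ on both sides, and noting that $\nu\cup h = \lambda$ is the same as $\nu = \lambda\setminus h$ for a $q$-rim hook $h$ of $\lambda$, yields precisely $\chi^\lambda(\pi) = \sum_{h} (-1)^{\mathrm{ll}(h)}\chi^{\lambda\setminus h}(\pi\setminus q)$.

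The one genuinely delicate point — the step I expect to be the main obstacle — is justifying the Pieri-type identity $p_q\cdot s_\nu = \sum_h (-1)^{\mathrm{ll}(h)} s_{\nu\cup h}$ with the *correct sign* $(-1)^{\mathrm{ll}(h)}$, since leg length is easy to misbook by an off-by-one or a transpose. The cleanest route is via the determinantal (Jacobi–Trudi) expression $s_\lambda = \det(h_{\lambda_i - i + j})$ together with the elementary identity $p_q h_m = h_{m+q} - h_{m-1} h_{q+1} + \dots$ expanded through the complete homogeneous functions; alternatively one can cite the standard references (e.g. Macdonald, or James–Kerber) for this step, since the excerpt already cites \cite{Ke1} for the adjacent convolution formula. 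In a self-contained write-up I would prove the sign bookkeeping by induction on $q$, tracking how a rim hook decomposes when its top-right box is removed, which is where the $(-1)^{\mathrm{ll}}$ factor accumulates one sign per row crossed. Once the signed Pieri rule is in hand, the rest is the formal symmetric-function manipulation above, with no further difficulty.
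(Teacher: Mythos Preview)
Your argument via the Frobenius character formula and the signed Pieri rule for $p_q \cdot s_\nu$ is correct and is one of the standard routes to the Murnaghan--Nakayama rule. However, the paper does not actually prove this lemma at all: its ``proof'' consists solely of the citation \emph{See \cite[\S 9]{Na1} or \cite[Theorem 4.10.2]{Sa1}}, treating the result as classical background. So you have supplied substantially more than the paper does. Your approach is the symmetric-function proof in the spirit of Macdonald or Stanley; Sagan's cited proof is more combinatorial, working directly with rim-hook tableaux rather than passing through Schur functions. One small caveat: the displayed identity you wrote for $p_q h_m$ is garbled (that is not how the Newton identities or the Jacobi--Trudi expansion look), so if you want a genuinely self-contained write-up you should either correct that step or, as you suggest, simply cite a standard reference for the signed Pieri rule---which then makes your proof no more self-contained than the paper's bare citation.
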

\begin{proof} See \cite[\S 9]{Na1} or \cite[Theorem 4.10.2]{Sa1}.\end{proof}

The dimension $\chi^\lambda(1)$ of the irreducible representation associated with $\lambda$ can be computed via the \emph{hook formula.} 

\begin{defn}
Let $\lambda\vdash n$ be a partition. The \emph{hook} of $(i,j)\in\lambda$ is
\begin{equation*}
H_{i,j}(\lambda):=\{(i,j')\in\lambda:~j'\geq j\}\cup\{(i',j)\in\lambda:~i'\geq i\}.
\end{equation*}
\end{defn}

\begin{lem}[Hook Formula]
\label{1-4-3}
Let $\lambda\vdash n$ be a partition. Then
\begin{equation*}
\chi^{\lambda}(1)=\frac{n!}{\prod\limits_{(i,j)\in \lambda}|H_{i,j}(\lambda)|}.
\end{equation*}
\end{lem}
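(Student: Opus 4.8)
The plan is to prove the hook formula by induction on $n$, with the branching rule as the engine. The base case $n=1$ is immediate: the unique partition $(1)$ satisfies $\chi^{(1)}(1)=1$ and has a single hook of length $1$. For the inductive step, first I would extract the branching rule from the Murnaghan--Nakayama rule (Lemma~\ref{1-4-2}): applying it with $q=1$ and $\pi=\operatorname{id}\in S_n$, the $1$-rim hooks of $\lambda$ are exactly the removable corner boxes $c$ of $\lambda$, each has leg length $0$, and $\pi\backslash 1$ is the identity of $S_{n-1}$, so
\[
  \chi^\lambda(1)=\sum_{c}\chi^{\lambda\backslash c}(1),
\]
the sum running over the corner boxes $c$ of $\lambda$. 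Writing $h_{ij}:=|H_{i,j}(\lambda)|$ and $F(\mu)$ for the product of the hook lengths of a partition $\mu$, and applying the inductive hypothesis $\chi^{\lambda\backslash c}(1)=(n-1)!/F(\lambda\backslash c)$, the claim $\chi^\lambda(1)=n!/F(\lambda)$ reduces to the purely combinatorial identity
\[
  \sum_{c}\frac{F(\lambda)}{F(\lambda\backslash c)}=n.
\]

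Next I would rewrite the summands. If $c=(x,y)$ is a corner then every hook length is unchanged upon removing $c$ except those of the boxes lying above $c$ in column $y$ or to the left of $c$ in row $x$, each of which drops by exactly $1$, while $c$ itself has $h_{xy}=1$; hence $F(\lambda)/F(\lambda\backslash c)=\prod_{i<x}\frac{h_{iy}}{h_{iy}-1}\prod_{j<y}\frac{h_{xj}}{h_{xj}-1}$, and the identity becomes $\sum_{(x,y)\text{ corner}}\prod_{i<x}\frac{h_{iy}}{h_{iy}-1}\prod_{j<y}\frac{h_{xj}}{h_{xj}-1}=n$. The slickest route to this is the Greene--Nijenhuis--Wilf hook walk: choose a box of $\lambda$ uniformly at random, then repeatedly jump from the current box to a uniformly chosen box strictly below it in its column or strictly to its right in its row, stopping on reaching a corner. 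A short induction on the number of rows and columns of the rectangle spanned by the starting box and the target corner shows that the walk terminates at a given corner $(x,y)$ with probability exactly $\frac1n\prod_{i<x}\frac{h_{iy}}{h_{iy}-1}\prod_{j<y}\frac{h_{xj}}{h_{xj}-1}$; summing over all corners gives total probability $1$, which is precisely the identity. (An algebraic alternative is to encode $\lambda$ by the shifted parts $\ell_i:=\lambda_i+l-i$, rewrite both $F(\lambda)$ and $\chi^\lambda(1)$ in terms of the $\ell_i$, and extract the identity from a partial-fraction expansion; I would prefer the hook walk, being more transparent.)

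The main obstacle is precisely the combinatorial identity $\sum_c F(\lambda)/F(\lambda\backslash c)=n$ — once it is in hand the rest is bookkeeping — and within it the heart of the matter is the probability computation for the hook walk. A minor technical point: to invoke Lemma~\ref{1-4-2} with $q=1$ and $\pi=\operatorname{id}$ one must note that $\operatorname{id}\in S_n$ does contain a $1$-cycle for every $n\geq1$, so the rule applies and the sum over $1$-rim hooks really is the sum over removable corners; and the factors $h_{iy}-1$, $h_{xj}-1$ above are all positive since each of the hooks in question contains at least two boxes.
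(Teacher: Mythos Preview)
Your outline is correct: the branching rule follows from Lemma~\ref{1-4-2} with $q=1$ exactly as you say, the reduction to $\sum_c F(\lambda)/F(\lambda\backslash c)=n$ is clean, and the Greene--Nijenhuis--Wilf hook walk does establish that identity. The one place where real work is hiding is the probability computation for the hook walk, which you flag but do not carry out; this is standard but not entirely trivial, so in a full write-up you would want to spell out the induction on the number of rows and columns between the start box and the target corner.

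That said, the paper does not attempt to prove this lemma at all: it is quoted as a classical result with a bare citation to Frame--Robinson--Thrall and to Sagan's textbook. So your approach is not so much different from the paper's as simply more than the paper provides. What you gain is a self-contained argument that stays within the combinatorial framework already set up (Murnaghan--Nakayama plus hooks), at the cost of having to execute the hook-walk probability calculation. For the purposes of this paper --- where the hook formula is only used as a black box to get the crude lower bound $\chi^\lambda(1)\geq\exp(n/4)$ in Lemma~\ref{dim-chi} --- the citation is entirely appropriate and your proof, while correct, would be more detail than is needed.
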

\begin{proof} See \cite[Theorem 1]{Fr1} or \cite[Theorem 3.10.2]{Sa1}.\end{proof}

We combine the Murnaghan--Nakayama rule and hook formula to show that $\chi(1)$ is exponentially large whenever $\chi$ is nontrivial and $\langle\chi,\1_\fC\rangle \neq 0$.

Note that $\mathfrak{C}$ is the union of conjugacy classes from $S_n$, since a conjugacy class consists of all permutations with the same cycle type. For fixed $p\in\Pi_n$ let $C_1,...,C_s$ denote all conjugacy classes of $\mathfrak{C}$ which contain a $p$-cycle. By removing a $p$-cycle from $C_i$ we obtain a conjugacy class $C_{i}\!\setminus p$ from $S_{n-p}$. Apparently, we have $S_{n-p}=\dot{\bigcup}_{i=1,...s}C_{i}\!\setminus p$. In addition, computing the cardinality of $C_{i}$ and $C_{i}\!\setminus p$ (see \cite[Formula (1.2)]{Sa1}) we obtain 
\begin{equation*}
|C_{i}|=\frac{n!}{(n-p)!p}|C_{i}\!\setminus p|. 
\end{equation*}
Let $\lambda\vdash n,~\lambda\neq (n)$. We now apply the Murnaghan--Nakayama rule (Lemma \ref{1-4-2}):

\begin{align}
\sum_{1\leq i\leq s}|C_i|\chi^{\lambda}(C_i)&=\sum_{1\leq i\leq s}\frac{n!}{(n-p)!p}|C_{i}\!\setminus p|\sum_{\substack{h\\p\text{-rim hook}\\\text{of }\lambda}}(-1)^{ll(h)}\chi^{\lambda\backslash h}(C_{i}\!\setminus p)\notag\\
&=\sum_{\substack{h\\p\text{-rim hook}\\\text{of }\lambda}}(-1)^{ll(h)}\cdot\frac{n!}{p}\cdot\langle\chi^{(n-p)},\chi^{\lambda\backslash h}\rangle \notag\\
&=
\begin{cases}
(-1)^{ll(h)}\frac{n!}{p},&\text{if }\exists~p\text{-rim hook } h \text{ of }\lambda\text{ with }\lambda\backslash h=(n-p),\\
0, &\text{otherwise.}
\end{cases}\nonumber
%\label{N-1-5-3}
\end{align}
Thus we can have $\langle \chi^\lambda, \1_\fC\rangle \neq 0$ only if $\lambda \in \Lambda_{n,p}$ for some $p\in \Pi_n$, where
\[
  \Lambda_{n,p} = \{\lambda\vdash n:~\lambda\neq (n)~\text{and}~\exists~p\text{-rim hook}~h~\text{such that}~\lambda\backslash h=(n-p)\}.
\]

\begin{lem}
  \label{dim-chi}
Let $n$ be sufficiently large. If $p\in\Pi_n$ and $\lambda\in \Lambda_{n,p}$ then
\[
  \chi^\lambda(1) \geq \exp(n/4).
\]
\end{lem}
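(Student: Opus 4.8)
The plan is to first pin down the possible shapes of the partitions $\lambda\in\Lambda_{n,p}$, and then bound $\chi^\lambda(1)$ from below via the hook formula (Lemma~\ref{1-4-3}). Throughout, fix $p\in\Pi_n$ and set $m=n-p$, so that $2n/5<m<n/2$.

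First I would determine the shape of $\lambda$. By definition $\lambda\setminus h=(m)$ for some $p$-rim hook $h$, so $h$ is exactly the skew shape $\lambda/(m)$. Since the single row $(m)$ occupies only the first row of the Ferrers diagram, rows $2,3,\dots$ of $\lambda$ lie entirely inside $h$; as $h$ contains no $2\times 2$ block, this forces $\lambda_i\leq 1$ for all $i\geq 3$, so $\lambda=(\lambda_1,\lambda_2,1^c)$ with $\lambda_1\geq\lambda_2\geq 1$, $c\geq 0$, and $\lambda_1+\lambda_2+c=n$. Since $\lambda\neq(n)$, $h$ must have boxes outside row $1$, so its topmost row is row $1$ or row $2$. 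If $h$ starts in row $2$ then $\lambda_1=m$; if $h$ starts in row $1$ then edgewise-connectedness of $h$ together with the no-$2\times2$ condition forces $\lambda_2=m+1$, whence $\lambda_1=p-1-c$. In either case $\lambda_1\leq p-1$, and since a $p$-rim hook has leg length at most $p-1$ the diagram of $\lambda$ has at most $p+1$ rows, so $c\leq p-1$ as well. Hence each of $\lambda_1$, $\lambda_2\,(\leq\lambda_1)$ and $c$ is at most $p-1<3n/5$.

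Next I would estimate $\chi^\lambda(1)$ for $\lambda=(\lambda_1,\lambda_2,1^c)$ directly from the hook lengths. The two corner boxes contribute $|H_{1,1}(\lambda)|=\lambda_1+c+1\leq n$ and $|H_{2,1}(\lambda)|=\lambda_2+c\leq n$; the remaining hook lengths in the first row form the set $\{1,\dots,\lambda_1\}\setminus\{\lambda_1-\lambda_2+1\}$ (product $\leq\lambda_1!$), those in the second row are $1,\dots,\lambda_2-1$ (product $\leq\lambda_2!$), and those in the first column below row $2$ are $1,\dots,c$ (product $c!$). Hence $\prod_{(i,j)\in\lambda}|H_{i,j}(\lambda)|\leq n^2\,\lambda_1!\,\lambda_2!\,c!$, so Lemma~\ref{1-4-3} gives
\[
  \chi^\lambda(1)\geq\frac{n!}{n^2\,\lambda_1!\,\lambda_2!\,c!}=\frac{1}{n^2}\binom{n}{\lambda_1,\lambda_2,c}.
\]
Since $\lambda_1+\lambda_2+c=n$, one of the three parts is $\geq n/3$; combined with the bound $<3n/5$ from the previous step, this part lies in $[n/3,3n/5]$, so discarding the other two parts and using unimodality of binomial coefficients gives $\binom{n}{\lambda_1,\lambda_2,c}\geq\binom{n}{\lceil n/3\rceil}$. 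The entropy bound then yields $\binom{n}{\lceil n/3\rceil}\geq 2^{0.9n}/(n+1)$ for all large $n$, since $H(1/3)=\log_2 3-\tfrac23>0.9$ with $H$ the binary entropy. Therefore $\chi^\lambda(1)\geq 2^{0.9n}/(n^2(n+1))\geq\exp(n/4)$ for all sufficiently large $n$, because $0.9\log 2>1/4$.

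The one delicate step is the shape analysis: one must carefully enumerate the partitions obtained by attaching a $p$-rim hook to a single row of length $n-p$, keeping track of where the hook can start. Everything else is routine bookkeeping, and the final estimate is very comfortable — the true exponential rate here is roughly $0.62n$, well above the required $n/4$ — so none of the intermediate inequalities need to be sharp.
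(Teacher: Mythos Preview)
Your proof is correct and follows essentially the same approach as the paper: first characterize $\Lambda_{n,p}$ as the set of near-hook shapes $(\lambda_1,\lambda_2,1^c)$ obtained by attaching a $p$-rim hook to $(n-p)$, then bound the product of hook lengths and apply the hook formula. The only difference is in the final bookkeeping: the paper uses the explicit case split (a)/(b) to simplify the hook product to $n(n-p)!p!$ and hence $\chi^\lambda(1)\geq\tfrac1n\binom{n}{p}\geq\tfrac1n(n/p)^p$, whereas you keep a uniform bound $n^2\lambda_1!\lambda_2!c!$, pass to a trinomial coefficient, and finish via pigeonhole and the entropy bound for $\binom{n}{\lceil n/3\rceil}$. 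Both routes are straightforward and give comfortably more than $\exp(n/4)$.
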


\begin{proof}
\def\casea{{\rm(a)}}
\def\caseb{{\rm(b)}}

First, we investigate the set $\Lambda_{n,p}$. We claim the following: \textit{Let $p\in\Pi_n$ and $\lambda\in \Lambda_{n,p}$. Then we have $\lambda\in\Lambda_{n,p}$ if and only if $\lambda=(\lambda_1,\lambda_2,1^{n-\lambda_1-\lambda_2})$, where either $\casea$ $~~\lambda_1=n-p$ and $1\leq \lambda_2\leq n-p~~$, or $\caseb$ $~~n-p<\lambda_1\leq p-1$ and $\lambda_2=n-p+1$.} (See Figure~\ref{young-diagrams}.)

\begin{figure}
\centering
\newcommand\ylw{\Yfillcolour{gray}}
\newcommand\wht{\Yfillcolour{white}}
\begin{tabular}{llll}
$\casea$ &
  $\Yboxdim{14pt}\young(!\ylw~~~!\wht,~,~,~,~,~,~,~)$&
  $\Yboxdim{14pt}\young(!\ylw~~~!\wht,~~,~,~,~,~,~)$&
  $\Yboxdim{14pt}\young(!\ylw~~~!\wht,~~~,~,~,~,~)$\\
  \vspace{10pt}\\
$\caseb$ &
  $\Yboxdim{14pt}\young(!\ylw~~~!\wht~,~~~~,~,~)$&
  $\Yboxdim{14pt}\young(!\ylw~~~!\wht~~,~~~~,~)$&
  $\Yboxdim{14pt}\young(!\ylw~~~!\wht~~~,~~~~)$
\end{tabular}
\caption{The two cases of $\lambda\backslash h = (n-p)$, $\lambda\neq (n)$ for $n = 10$, $p = 7$}
\label{young-diagrams}
\end{figure}

You can see this as follows: Let $\lambda\in\Lambda_{n,p}$. Then the Ferrers diagram of $\lambda$ has a block of $n-p$ boxes in the first row and around this block there is a $p$-rim hook $h$. If the rim hook $h$ does not contain a box from the first row of $\lambda$, then $\lambda_1=n-p$ and $1\leq\lambda_2\leq n-p$, i.e., $\casea$ is satisfied. If $h$ contains a box from the first row, then since $\lambda \neq (n)$ it follows immediately that $n-p<\lambda_1$ and $\lambda_2=n-p+1$. As $h$ is a $p$-rim hook, we also have $\lambda_1\leq p-1$. So $\caseb$ is fulfilled. Conversely, if $\lambda=(\lambda_1,\lambda_2,1^{n-\lambda_1-\lambda_2})$ such that $\casea$ or $\caseb$ is satisfied, then there obviously exists a $p$-rim hook $h$ such that $\lambda\backslash h=(n-p)$. Thus $\lambda\in\Lambda_{n,p}$.

Second, let $p\in\Pi_n$ and $\lambda\in\Lambda_{n,p}$. Using the above description of $\Lambda_{n,p}$ yields, for the product of hook lengths of $\lambda$,
\begin{equation*}
 T:=\prod_{(i,j)\in\lambda}|H_{i,j}(\lambda)|\leq n\lambda_1!p(\lambda_2-1)!(n-\lambda_1-\lambda_2)!.
\end{equation*}
We can bound this expression as follows:\\
\textit{Case $\casea$:} $T\leq n(n-p)!p(\lambda_2-1)!(p-\lambda_2)!\leq n(n-p)!p!$.\\
\textit{Case $\caseb$:} $T\leq n\lambda_1!p(n-p)!(p-1-\lambda_1)!\leq n(n-p)!p!$.\\
Thus it follows from the hook formula (Lemma \ref{1-4-3}) for sufficiently large $n$ that
\begin{equation*}
\chi^{\lambda}(1)\geq \frac{1}{n}\binom{n}{p} \geq \frac{1}{n}\left(\frac{n}{p}\right)^p\geq\exp(\tfrac{1}{4}n).\qedhere
\end{equation*}
\end{proof}

Finally, we will use the following estimate, due to T. W. Müller and J.-C. Schlage-Puchta \cite[Theorem 1]{Mu1}, which improves the trivial bound  $|\chi(\sigma)|\leq \chi(1)$ for an irreducible character $\chi$ of $S_n$, if the number $f(\sigma)$ of fixed points of $\sigma\in S_n$ is not too large.

\begin{lem}
\label{character-bound}
Let $\chi\in\Irr(S_n)$ be an irreducible character, let $\sigma\in S_n$ be a permutation and let $n$ be sufficiently large. Then we have
\begin{equation*}
|\chi(\sigma)|\leq \chi(1)^{1-\delta(\sigma)}
\end{equation*}
where
\begin{equation*}
 \delta(\sigma):=
 \begin{cases}
  \frac{1}{13}, &\text{if }f(\sigma)=0\\
  \frac{\log\left(n/f(\sigma)\right)}{32 \log n}, &\text{if }1\leq f(\sigma)\leq n.
  \end{cases}
\end{equation*}

\end{lem}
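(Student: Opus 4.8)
The plan is to go through the Murnaghan--Nakayama rule, applied repeatedly to peel off the cycles of $\sigma$ one at a time. If $\sigma$ has cycles of lengths $\ell_1\ge\ell_2\ge\dots\ge\ell_t$, then iterating Lemma~\ref{1-4-2} expresses $\chi^\lambda(\sigma)$ as a signed sum over the sequences of rim-hook removals that strip the Ferrers diagram of $\lambda$ down to the empty diagram by deleting, in turn, an $\ell_1$-rim hook, then an $\ell_2$-rim hook, and so on down to an $\ell_t$-rim hook. Writing $N(\lambda,\ell)$ for the number of such \emph{rim-hook tableaux}, we get $|\chi^\lambda(\sigma)|\le N(\lambda,\ell)$, while taking $\sigma=1$ (so that every $\ell_i=1$ and $t=n$) and invoking the hook length formula (Lemma~\ref{1-4-3}) shows that $\chi^\lambda(1)$ equals $N(\lambda,(1^n))$, the number of standard Young tableaux of shape $\lambda$. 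Thus the entire statement reduces to the purely combinatorial inequality $N(\lambda,\ell)\le N(\lambda,(1^n))^{\,1-\delta(\sigma)}$.

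First I would reduce to the case $f(\sigma)=0$. A permutation with $f$ fixed points lies in a Young subgroup $S_{n-f}\times S_f$, acting as a fixed-point-free element $\sigma_0\in S_{n-f}$ on the first block and trivially on the second, so iterated branching gives $\chi^\lambda(\sigma)=\sum_{\mu\vdash n-f}d_{\lambda/\mu}\,\chi^\mu(\sigma_0)$, where $d_{\lambda/\mu}$ counts standard skew tableaux of shape $\lambda/\mu$ and $\sum_\mu d_{\lambda/\mu}\,\chi^\mu(1)=\chi^\lambda(1)$. Granting the fixed-point-free bound with the absolute constant $1/13$, I would choose, among the $\mu$ reachable from $\lambda$ by deleting $f$ boxes, one whose dimension is not far below $\chi^\lambda(1)$: deleting boxes a row at a time from the bottom loses at most about a factor $\binom{n}{f}$ in dimension, and this is where an exponent of order $\log(n/f)/\log n$ enters. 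Substituting such a $\mu$ into the branching identity and bounding the number of terms crudely should produce the stated $\delta(\sigma)$ in the range $1\le f\le n$.

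For a fixed-point-free $\sigma$ I would split on the shape of $\lambda$. If $\lambda_1$ and $\lambda_1'$ are both at most $(1-\eta)n$ for a small absolute $\eta>0$, one runs the Murnaghan--Nakayama recursion with care --- working on the abacus, counting the admissible bead moves and tracking leg lengths --- so that each removal of an $r$-rim hook with $r\ge2$ forces either a cancellation or a genuine drop in the number of continuations; summing the resulting geometric saving over the moved points of $\sigma$ gives $N(\lambda,\ell)\le q_0^{\,n}N(\lambda,(1^n))$ for an explicit absolute $q_0<1$, a Roichman-type bound with quantitative constants. Since such a $\lambda$ has $\chi^\lambda(1)\ge\exp(cn)$ with $c=c(\eta)>0$, a sufficiently good $q_0$ turns this into $\chi^\lambda(1)^{-1/13}$. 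In the remaining case, say $\lambda_1>(1-\eta)n$, the characters are genuinely small: the early Murnaghan--Nakayama steps are heavily constrained (a long cycle can only be peeled off the first row), which reduces $\chi^\lambda(\sigma)$ to a character of a symmetric group on fewer than $\eta n$ points times explicit factors; alternatively one invokes the classical product formula for fat-hook characters as elementary symmetric functions of the cycle lengths, which are small precisely because $\sigma$ has no fixed points, and one checks $|\chi^\lambda(\sigma)|\le\chi^\lambda(1)^{12/13}$ by hand. The transposed case $\lambda_1'>(1-\eta)n$ then follows from $\chi^{\lambda'}(\sigma)=\operatorname{sgn}(\sigma)\,\chi^\lambda(\sigma)$.

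The main obstacle is the tightness of the calibration rather than any individual step. The exponent $\delta(\sigma)$ must degrade all the way down to order $(n-f)/(n\log n)$ once $f$ is within $o(n)$ of $n$, yet equal the fixed constant $1/13$ at $f=0$, and neither the trivial orthogonality bound $|\chi(\sigma)|\le|C_{S_n}(\sigma)|^{1/2}$ nor an off-the-shelf Roichman constant is strong enough to hit these targets uniformly in $\lambda$. Extracting the specific constant $1/13$ from the thick and thin cases --- which forces a coordinated choice of $\eta$ and $q_0$ --- and matching the row-deletion loss in the reduction step so that $\log(n/f)$ emerges with the stated denominator $32$, is where the delicate work lies.
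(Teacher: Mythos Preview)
The paper does not prove this lemma at all: it is quoted as a black box from M\"uller and Schlage-Puchta~\cite[Theorem~1]{Mu1}, with no argument given beyond the citation. So there is nothing in the paper's own proof to compare your proposal against.

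That said, your sketch is broadly in the spirit of the actual M\"uller--Schlage-Puchta argument (iterated Murnaghan--Nakayama, a Roichman-type saving for balanced shapes, separate treatment of thin partitions, and a reduction handling fixed points), so as a high-level roadmap it is reasonable. But as written it is not a proof: the reduction from $f(\sigma)\geq 1$ to $f(\sigma)=0$ via branching needs more than ``choose a $\mu$ whose dimension is not far below $\chi^\lambda(1)$'', since you must control the \emph{sum} $\sum_\mu d_{\lambda/\mu}|\chi^\mu(\sigma_0)|$ and the number of terms is not small; and the claim that the thick case yields a specific $q_0$ compatible with the constant $1/13$, together with the matching of the thin case, is precisely the content of the cited paper and cannot be waved through. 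If your intention is to reproduce the proof rather than cite it, you would essentially be rewriting a substantial portion of~\cite{Mu1}; for the purposes of this paper the correct move is simply to cite the result, as the authors do.
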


\section{Two permutations}

If $\sigma\in H$ for some primitive $H\not\geq A_n$, then we know from Lemma~\ref{babai-min-degree} that $\sigma\in \fM$. Suppose then that we pick $(\pi,\sigma)\in S_n \times \fM$ uniformly at random, and let $X$ be the number of $i\in\{0,\dots,N-1\}$ such that $\pi\sigma^i \in \fC$. If $X>0$ then by Lemma~\ref{jordan-p-cycle} we cannot have $\pi,\sigma\in H$ for any primitive $H\not\geq A_n$. Thus by Chebyshev's inequality we get
\begin{equation}\label{second-moment-bound}
  P_2(n) \leq \frac{|\fM|}{n!} Q(X = 0) \leq \frac{|\fM|}{n!} \frac{\Var X}{(\E X)^2},
\end{equation}
where $Q$ denotes the uniform distribution on $S_n \times \fM$.
Now since we are still taking $\pi$ uniformly at random from $S_n$ we clearly have
\[
  \E X = N \frac{|\fC|}{n!},
\]
while
\[
  \Var X = N \frac{|\fC|}{n!} \(1 - \frac{|\fC|}{n!}\) + 2 \sum_{0\leq i < j < N} \( Q(\pi \sigma^i, \pi \sigma^j \in \fC) - \(\frac{|\fC|}{n!}\)^2\).
\]
Now we express $Q(\pi \sigma^i, \pi \sigma^j \in \fC)$ in terms of characters. Define
\[
  r_\nu(\tau) = \#\{\sigma\in\fM : \sigma^\nu = \tau\}.
\]
Then by Lemma~\ref{convolution-lemma} we have
\begin{align*}
  Q(\pi\sigma^i, \pi\sigma^j \in \fC)
  &= \frac1{n!|\fM|} \sum_{(\pi,\sigma)\in S_n\times \fM} \1_\fC(\pi\sigma^i) \1_\fC(\pi\sigma^j)\\
  &= \frac1{n!|\fM|} \sum_{x,y\in S_n} \1_\fC(x) \1_\fC(y) r_{j-i}(x^{-1} y)\\
  &= \frac{n!}{|\fM|} \sum_{\chi\in\Irr(S_n)} \frac{\langle \chi, \1_\fC\rangle^2 \langle \chi, r_{j-i}\rangle}{\chi(1)}.
\end{align*}
The contribution from the trivial character $\chi=1$ is precisely $(|\fC|/n!)^2$, since $\langle 1, r_{j-i}\rangle=|\fM|/n!$. Thus it follows
\begin{equation}\label{variance}
  \Var X = N \frac{|\fC|}{n!}\(1 - \frac{|\fC|}{n!}\) + \frac{2 n!}{|\fM|} \sum_{\nu = 1}^N (N - \nu) \sum_{\chi\neq 1} \frac{\langle \chi, \1_\fC\rangle^2 \langle \chi, r_\nu\rangle}{\chi(1)}.
\end{equation}
\noindent
Note that
\[
  \langle \chi, r_\nu\rangle = \frac1{n!} \sum_{\sigma\in\fM} \chi(\sigma^\nu).
\]
\noindent
Therefore we obtain
\begin{align*}
  \frac{|\langle \chi, r_\nu\rangle|}{\chi(1)} 
  &\leq \frac1{n!} \sum_{\sigma \in \fM} \frac{|\chi(\sigma^\nu)|}{\chi(1)}\\
  &\leq \frac{\#\{\sigma: \sigma^\nu = 1\}}{n!} + \max_{\sigma\in\fM: \sigma^\nu\neq 1} \frac{|\chi(\sigma^\nu)|}{\chi(1)}.
\end{align*}
By Lemma~\ref{dim-chi} we know that $\chi(1) \geq \exp(n/4)$ whenever $\langle \chi, \1_\fC\rangle \neq 0$, and by definition of $\fM$ we know that $\sigma^\nu$ has at most $n-n^{1/2}/2$ fixed points whenever $\sigma^\nu \neq 1$, so Lemma~\ref{character-bound} yields
\[
  \max_{\sigma\in\fM : \sigma^\nu\neq 1} \frac{|\chi(\sigma^\nu)|}{\chi(1)} \leq \exp(n/4)^{-\delta},
\]
where
\[
  \delta = \frac{\log(n/(n-n^{1/2}/2))}{32\log n}\geq \frac{n^{-1/2}}{64 \log n}.
\]
Thus
\[
  \max_{\sigma\in\fM:\sigma^\nu\neq 1} \frac{|\chi(\sigma^\nu)|}{\chi(1)} \leq \exp\(-\frac{n^{1/2}}{2^8 \log n}\).
\]
By orthogonality of characters it follows that
\begin{align}
  \sum_{\nu = 1}^N (N-\nu)& \sum_{\chi\neq 1} \frac{\langle \chi, \1_\fC\rangle^2 \langle \chi, r_\nu\rangle}{\chi(1)}\nonumber \\
  &\leq N \sum_{\nu=1}^N \sum_{\chi\in\Irr(S_n)} |\langle \chi, \1_\fC\rangle|^2 \(\frac{\#\{\sigma: \sigma^\nu = 1\}}{n!} + \exp\(- \frac{n^{1/2}}{2^8 \log n}\)\)\nonumber \\
  &= N \frac{|\fC|}{n!} \(\sum_{\nu = 1}^N \frac{\#\{\sigma : \sigma^\nu = 1\}}{n!} + N \exp\(-\frac{n^{1/2}}{2^8\log n}\)\).\label{sigma-nu-bound}
\end{align}
To finish we need to count pairs $(\sigma,\nu)$ such that $\sigma^\nu = 1$.

We will need the following simple bound for the number of permutations without long cycles. (See \cite{Ma1,Pe1} for more precise estimates involving the Dickman function.)

\begin{lem}
Let $r$ and $m$ be positive integers such that $r \leq m/2$. Then the number of $\pi\in S_m$ all of whose cycles have length at most $r$ is bounded by
\[
  \(\frac{2r}{m}\)^\frac{m}{2r} m!.
\]
\end{lem}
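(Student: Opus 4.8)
The plan is to reformulate the statement probabilistically. Write $f_r(m)$ for the probability that a uniformly random $\pi\in S_m$ has all cycles of length at most $r$; since the quantity to be bounded is $m!\,f_r(m)$, it suffices to prove $f_r(m)\le (2r/m)^{m/(2r)}$. I would do this with the standard cycle-revealing argument. Set $b_0=1$, and having exposed the cycles through $b_0,\dots,b_{i-1}$, let $b_i$ be the least point not lying on any of them, provided such a point remains. The key fact is that, conditional on the cycles through $b_0,\dots,b_{i-1}$ (both which points they contain and how $\pi$ acts on them), the restriction of $\pi$ to the set of $m_i$ remaining points is a uniformly random permutation of that set; hence the cycle of $b_i$ has length uniform on $\{1,\dots,m_i\}$, and the conditional probability that it has length at most $r$ is $\min(r,m_i)/m_i\le r/m_i$.

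Next I would iterate this $k:=\lfloor m/(2r)\rfloor+1$ times. If every cycle of $\pi$ has length at most $r$, then in particular $b_0,\dots,b_{k-1}$ are all well defined (for $i\le k-1$ at most $(k-1)r\le m/2<m$ points have been removed) and the cycle of each $b_i$ has length at most $r$; moreover $m_i\ge m-ir\ge m/2$ for $i\le k-1$. Chaining the conditional probabilities along this sequence gives
\[
  f_r(m)\le \prod_{i=0}^{k-1}\frac{r}{m-ir}\le \prod_{i=0}^{k-1}\frac{2r}{m}=\Bigl(\frac{2r}{m}\Bigr)^{k}.
\]
Finally, since $r\le m/2$ we have $2r/m\le 1$, while $k=\lfloor m/(2r)\rfloor+1\ge m/(2r)$; hence $(2r/m)^{k}\le (2r/m)^{m/(2r)}$, and multiplying by $m!$ completes the proof.

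I do not expect a real obstacle here; the only point requiring a little care is the ``$+1$'' in the choice of $k$. Taking $k=\lfloor m/(2r)\rfloor$ yields the superficially cleaner $f_r(m)\le (2r/m)^{\lfloor m/(2r)\rfloor}$, but because $2r/m\le 1$ this is actually \emph{weaker} than what we want; so one must expose one additional cycle and check there are still enough points left to do so, which is precisely where the hypothesis $r\le m/2$ enters. Everything else is routine. Alternatively, the same estimate can be derived without any probabilistic language from the recursion $f_r(m)=\tfrac1m\sum_{\ell=1}^{\min(r,m)}f_r(m-\ell)$ by induction on $m$.
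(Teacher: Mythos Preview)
Your proof is correct and follows essentially the same approach as the paper: both expose the cycle containing the least unused point, use that its length is uniform on $\{1,\dots,m_i\}$ to bound each step by $r/(m-ir)\le 2r/m$, and iterate $\lfloor m/(2r)\rfloor+1$ times. The only cosmetic difference is that the paper packages the iteration via the auxiliary quantity $q(m,r)=\max_{m'\ge m}p(m',r)$ and a recurrence, whereas you chain the conditional probabilities directly.
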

\begin{proof}
Let $p(m,r)$ be the probability that a random $\pi\in S_m$ has no cycle of length greater than $r$. Recall that we can sample $\pi$ as follows: First we choose the length $j$ of the cycle containing $1$ uniformly from $\{1,\dots,m\}$, then we choose the set $\{\pi(1),\dots,\pi^{j-1}(1)\}$ uniformly from all possible $(j-1)$-subsets of $\{2,\dots,m\}$, and then we choose (inductively) a random permutation of $\{1,\pi(1),\dots,\pi^{j-1}(1)\}^c$. Since the probability that $j\leq r$ is clearly $r/m$, we deduce the recurrence
\[
  p(m,r) \leq \frac{r}{m} \max_{1\leq j \leq r} p(m-j,r).
\]
Let $q(m,r) = \max_{m' \geq m} p(m',r)$. Then we have
\[
  q(m,r) \leq \frac{r}{m} q(m-r,r),
\]
whenever $m > r$, while of course $q(m,r)=1$ if $m \leq r$. Thus provided $r \leq m/2$ we have
\[
  q(m,r) \leq \frac{r}{m} \frac{r}{m-r} \cdots \frac{r}{m - \floor{m/(2r)} r} \leq \(\frac{2r}{m}\)^{1 + \floor{\frac{m}{2r}}} \leq \(\frac{2r}m\)^\frac{m}{2r}.\qedhere
\]
\end{proof}

\begin{lem}
% \label{1-5-2}
Assume $N\geq n$. Then we have
 \begin{equation*}
  k(N) := \#\{(\nu,\sigma): 1\leq\nu\leq N,~ \sigma\in S_n,~\sigma^{\nu}=1 \} = N^{1+o(1)} n! n^{-2}.
 \end{equation*}
for all sufficiently large $n$.
\end{lem}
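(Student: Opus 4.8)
The plan is to reduce everything to an $N$-free bound on $\sum_{\sigma\in S_n}1/\mathrm{ord}(\sigma)$. Indeed $k(N)=\sum_{\sigma\in S_n}\#\{\nu\le N:\mathrm{ord}(\sigma)\mid\nu\}=\sum_{\sigma\in S_n}\lfloor N/\mathrm{ord}(\sigma)\rfloor$, so on the one hand $k(N)\le N\sum_{\sigma}1/\mathrm{ord}(\sigma)$, and on the other hand, keeping only the $(n-1)!$ $n$-cycles (each of order $n$, each contributing $\lfloor N/n\rfloor\ge N/(2n)$ since $N\ge n$) gives $k(N)\ge(n-1)!\,\lfloor N/n\rfloor\ge Nn!/(2n^2)$, which is already $N^{1-o(1)}n!\,n^{-2}$. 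Thus the hypothesis $N\ge n$ is used only for this lower bound, and it remains to prove the clean statement
\[
  \sum_{\sigma\in S_n}\frac1{\mathrm{ord}(\sigma)}\ \le\ n^{o(1)}\,\frac{n!}{n^2};
\]
combining this with $N\ge n$ (so $n^{o(1)}\le N^{o(1)}$) yields $k(N)\le N^{1+o(1)}n!\,n^{-2}$.

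To prove the displayed bound I would work with the two longest cycles. Let $c_1(\sigma)\ge c_2(\sigma)$ be the two largest cycle lengths of $\sigma$, with the convention $c_2=1$ when $\sigma$ is an $n$-cycle; then $\mathrm{ord}(\sigma)\ge\mathrm{lcm}(c_1,c_2)$, and this is an equality for $n$-cycles. The $n$-cycles contribute $(n-1)!/n=n!/n^2$, which is harmless. For the other permutations, grouping by $(c_1,c_2)=(a,b)$ and choosing the two longest cycles successively (so that the remaining $n-a-b$ points carry a permutation all of whose cycles have length $\le b$) gives
\[
  \#\{\sigma:c_1=a,\ c_2=b\}\ \le\ \frac{n!}{ab}\,g_{a,b},\qquad
  g_{a,b}:=\frac{\#\{\pi\in S_{n-a-b}:\text{every cycle of }\pi\text{ has length}\le b\}}{(n-a-b)!}.
\]
Using $\mathrm{lcm}(a,b)^{-1}=\gcd(a,b)/(ab)$, this reduces the problem to showing $\Sigma:=\sum_{1\le b\le a,\ a+b\le n}g_{a,b}\gcd(a,b)(ab)^{-2}\le n^{o(1)}/n^2$. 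The crucial point — and the reason a naive argument fails — is that one must retain the $\gcd$: replacing $\mathrm{lcm}(a,b)$ by $a$ loses a whole factor $n$. Instead one uses only the elementary estimates $\sum_{b\le B}\gcd(a,b)\le B\,\tau(a)$, $\sum_{b\le B}\gcd(a,b)/b\le\tau(a)(1+\log B)$, and $\sum_{b\ge1}\gcd(a,b)/b^2\le\zeta(2)\sigma(a)/a$, each carrying only an $n^{o(1)}$ loss since $\tau(a),\sigma(a)/a=n^{o(1)}$ for $a\le n$, together with the Lemma on permutations without long cycles.

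Writing $r=n-a$ and $S_a:=\sum_{b=1}^{\min(a,r)}g_{a,b}\gcd(a,b)/b^2$, so $\Sigma=\sum_{a=1}^{n-1}a^{-2}S_a$, I would bound $S_a$ as follows. For $b\le r/3$ the Lemma gives $g_{a,b}\le(2b/(n-a-b))^{(n-a-b)/2b}\le 2b/(n-a-b)\le 3b/r$; for $b>r/3$ one uses only $g_{a,b}\le1$ and $b^2>r^2/9$. Plugging these into the arithmetic estimates above yields $S_a\le n^{o(1)}/r=n^{o(1)}/(n-a)$ for every $a$. This is still not enough, because $\sum_{a}a^{-2}\,n^{o(1)}/(n-a)$ has an $n^{o(1)}/n$ contribution from small $a$; so for $a\le K:=n/\log^2 n$ one uses the Lemma more forcefully: there $b\le a\le K$ and $n-a-b\ge n/2$, so $g_{a,b}\le(4b/n)^{n/4b}$, which for $b\le K$ is superpolynomially small, giving $S_a\le n^{-\omega(1)}$ and making $\sum_{a\le K}a^{-2}S_a$ negligible. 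For $a>K$ we get $\sum_{a>K}a^{-2}S_a\le n^{o(1)}\sum_{K<a<n}\big(a^2(n-a)\big)^{-1}$, and splitting the last sum at $a=n/2$ (for $a\le n/2$ it is $\le\tfrac2n\sum_{a>K}a^{-2}\le 4\log^2n/n^2$, and for $a>n/2$ it is $\le\tfrac4{n^2}\sum_{j\ge1}j^{-1}=\mathcal O(\log n/n^2)$) bounds it by $\mathcal O(\log^2 n/n^2)$. Hence $\Sigma\le n^{o(1)}/n^2$, which finishes the proof.

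The main obstacle is precisely the estimate for $\Sigma$. The naive bound $\mathrm{ord}(\sigma)\ge c_1(\sigma)$ only yields $k(N)\le N^{1+o(1)}n!\,n^{-1}$ (of Bovey's strength), and to gain the extra factor of $n$ one must simultaneously exploit, in the right regime, the cancellation obtained by averaging $\gcd(a,b)$ over $b$ and the superpolynomial scarcity of permutations all of whose cycles are short; the threshold $K=n/\log^2 n$ is tuned so that both halves of the argument succeed at once, and verifying this (together with the routine but slightly fiddly arithmetic sums) is where the real work lies.
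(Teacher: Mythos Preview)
Your argument is correct (modulo the harmless slip ``$\sum_{j\geq 1}j^{-1}=\mathcal{O}(\log n/n^2)$'', where you clearly mean the finite sum $\sum_{1\leq j<n/2}j^{-1}=\mathcal{O}(\log n)$). The route, however, is genuinely different from the paper's. You first sum over $\nu$ to reduce to the clean $N$-free statement $\sum_{\sigma\in S_n}1/\mathrm{ord}(\sigma)\leq n^{o(1)}n!/n^2$, and then extract the two longest cycles $c_1\geq c_2$, using $\mathrm{ord}(\sigma)\geq\mathrm{lcm}(c_1,c_2)$ and bounding the leftover $S_{n-c_1-c_2}$ by the short-cycle lemma; the extra factor of $n$ comes from the $\gcd$-averaging estimates. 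The paper instead keeps the $\nu$-sum and works with $A_3(n,\nu)=\#\{\sigma:\sigma^\nu=1\}$: it strips off one cycle of length $>n/\log n$ (which must divide $\nu$, costing only a divisor bound $d(\nu)=N^{o(1)}$), and bounds the remaining $A_3(n-j,\nu)$ by a second short/long split. Both proofs are ``two levels deep'' in cycle removal and both lean on the same short-cycle lemma; the paper trades your $\gcd$-sums for the divisor-function bound, while your version has the advantage of isolating an intrinsic statement about $\sum_\sigma 1/\mathrm{ord}(\sigma)$ that is independent of $N$.
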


In the proof we will find it convenient to use the Vinogradov notation $X \ll Y$ familiar from analytic number theory, which means simply $X\leq CY$ for some implicit constant $C$, or in other words $X \leq O(Y)$.

\begin{proof}
First, we establish that $k(N)\gg N n! n^{-2}$: Let $D$ be the conjugacy class of $n$-cycles in $S_n$. Obviously, $|D|=\frac{n!}{n}$ and $\textrm{ord}(\sigma)=n$ for $\sigma\in D$. Thus we obtain
\begin{equation*}
 k(N)\geq \frac{n!}{n}\cdot\left\lfloor \frac{N}{n}\right\rfloor \geq \frac12 N n! n^{-2}.
\end{equation*}

Second, we prove that $k(N)\leq N^{1+o(1)} n!n^{-2}$: For a permutation $\pi\in S_m$ and $1\leq j\leq m$ denote by $c_j(\pi)$ the number of $j$-cycles of $\pi$. Let $m$ be sufficiently large and let $r(m):=\left\lfloor\frac{m}{\log m}\right\rfloor$. By the previous Lemma we have
\begin{align*}
A_1(m)
&:=\#\{\sigma\in S_m:~~c_j(\sigma)=0 ~\forall~ j>r(m)\}\\
& \leq \(\frac{2r(m)}m\)^\frac{m}{2r(m)} m!\\
& \leq \(\frac{2}{\log m}\)^\frac{\log m}{2} m!\\
& \ll \frac{m!}{m^2}.
\end{align*}
In addition, we consider for a given positive integer $\nu$ the number of permutations $\sigma\in S_m$ having at least one cycle of length $>r(m)$ such that $\sigma^{\nu}=1$:
\begin{align*}
 A_2(m,\nu) &:=\#\left\{\sigma\in S_m:~~\sigma^{\nu}=1\wedge\bigl(\exists j>r(m):~c_j(\sigma)\neq 0\bigr)\right\}\\
 &\leq \sum_{\substack{j|\nu\\j>r(m)}}\frac{m!}{j}\leq \frac{m!}{r(m)}\cdot d(\nu),
\end{align*}
where $d(\nu)$ denotes the number of divisors of $\nu$. Combining the previous two results yields
\begin{equation*}
 A_3(m,\nu):=\#\{\sigma\in S_m:~~\sigma^{\nu}=1\}\leq A_1(m)+A_2(m,\nu)\ll \frac{m!}{r(m)}\cdot d(\nu).
 \end{equation*}
Furthermore, we give an upper bound for the sum $\sum_{1\leq\nu\leq N}A_2(n,\nu)$. Applying the preceding estimate we get for all sufficiently large $n$
\begin{align*}
 \sum_{1\leq\nu\leq N}A_2(n,\nu)
 &\leq \sum_{1\leq\nu\leq N}\sum_{\substack{r(n)<j\leq n\\j|\nu}}\binom{n}{j}\frac{j!}{j} A_3(n-j,\nu)\\
 &\ll  N n! n^{-2} + \sum_{r(n)<j<n}\sum_{\substack{1\leq\nu\leq N\\j|\nu}}\frac{n!\log (n-j)}{j(n-j)}\cdot d(\nu).
\end{align*}
As $d(\nu)\ll \nu^{1/\log\log\nu}$ (see \cite[Theorem 317]{Ha1}) we have $d(\nu)\leq N^{o(1)}$ for $\nu\leq N$. Therefore, it follows that
\begin{align*}
\sum_{1\leq\nu\leq N}A_2(n,\nu)
 &\ll N n! n^{-2} + N^{1+o(1)} n! \sum_{r(n) < j < n} \frac{\log (n-j)}{j^2(n-j)}\\
 & = N^{1+o(1)} n! n^{-2}.
\end{align*}
Thus, we conclude
\begin{equation*}
 k(N)\leq \sum_{1\leq\nu\leq N}\( A_1(n)+A_2(n,\nu)\)\leq N^{1+o(1)} n!n^{-2}. \qedhere
\end{equation*}
\end{proof}
\noindent
Combining the preceding Lemma with \eqref{variance} and \eqref{sigma-nu-bound} we get
\[
  \Var X \leq N \frac{|\fC|}{n!} + N^{2+o(1)} \frac{|\fC|}{|\fM|} n^{-2}.
\]
\noindent
For each $p\in\Pi_n $ there are $\binom{n}{p}(p-1)!(n-p)!=\frac{n!}{p}$ elements of $S_n$ containing a $p$-cycle. Therefore we have $|\fC| \geq \frac{n!}{n}|\Pi_n|$ and (a weak version of) the prime number theorem yields
\[
 |\fC| \geq \frac{n!}{2\log n}
\]
for sufficiently large $n$. Thus it follows from \eqref{second-moment-bound} that
\[
  P_2(n) \leq \frac1N \frac{|\fM|}{|\fC|} + N^{o(1)} \frac{n!}{|\fC|} n^{-2} \leq \frac{2 \log n} N + N^{o(1)} \frac{\log n}{n^2}.
\]
Putting $N = n^2$ we get
\[
  P_2(n) \leq n^{-2+o(1)},
\]
as required.

\section{Three permutations}

In this last section we consider
\[
  P_3(n) = P(\{(\pi,\sigma,\tau) \in S_n^3 : \pi,\sigma,\tau \in H~\text{for some primitive}~H\not\geq A_n\}).
\]
The proof is much like that of the previous section, except that we use the collection of words of the form $\pi w(\sigma,\tau)$ in place of $\pi,\pi\sigma,\dots,\pi\sigma^{N-1}$.

Let
\[
  \fM_2 = \{(\sigma,\tau)\in S_n^2 : m(\langle \sigma,\tau\rangle) > \sqrt{n}/2\}.
\]
By Lemma~\ref{babai-min-degree} we know that if $\sigma,\tau\in H$ for some primitive $H\not\geq A_n$ then $(\sigma,\tau)\in\fM_2$, so we may assume that we pick $(\sigma,\tau)$ randomly from $\fM_2$. Let $W_N$ be the set of all words $w\in F_2$ of length at most $N$. Supposing we pick $\pi \in S_n$ and $(\sigma,\tau)\in\fM_2$ at random, let $X$ be the number of $w\in W_N$ of length at most $N$ such that $\pi w(\sigma,\tau)\in\fC$. Then
\[
  P_3(n) \leq \frac{|\fM_2|}{n!^2} Q(X=0) \leq \frac{|\fM_2|}{n!^2} \frac{\Var X}{(\E X)^2} ~,
\]
where $Q$ denotes the uniform distribution on $S_n\times \fM_2$. Now
\[
  \E X = |W_N| \frac{|\fC|}{n!}
\]
and
\begin{align*}
  \Var X
  &= \sum_{w,w'\in W_N} \left(Q(\pi w(\sigma,\tau), \pi w'(\sigma,\tau) \in \fC) - \frac{|\fC|^2}{n!^2}\right)\\
  &= |W_N| \frac{|\fC|}{n!}\(1 - \frac{|\fC|}{n!}\) + \sum_{\substack{w,w'\in W_N\\ w\neq w'}} \( Q(\pi w(\sigma,\tau), \pi w'(\sigma,\tau) \in \fC) - \frac{|\fC|^2}{n!^2}\).
\end{align*}
For $w \in F_2$ and $x\in S_n$, let
\[
  r_w(x) = \#\{(\sigma,\tau)\in\fM_2 : w(\sigma,\tau) = x\}.
\]
Then
\begin{align*}
  \frac{|\fM_2|}{n!^2} Q(\pi w(\sigma,\tau), \pi w'(\sigma,\tau) \in \fC)
  &= \frac1{n!^3} \sum_{\pi\in S_n} \sum_{(\sigma,\tau)\in\fM_2} \mathds{1}_\fC(\pi w(\sigma,\tau)) \mathds{1}_\fC(\pi w'(\sigma,\tau))\\
  &= \frac1{n!^3} \sum_{x,y\in S_n} \1_\fC(x) \1_\fC(y) r_{w^{-1} w'}(x^{-1} y),\\
  &= \frac1{n!} \sum_{\chi\in\Irr(S_n)} \frac{\langle \chi, \1_\fC\rangle^2 \langle \chi, r_{w^{-1} w'}\rangle}{\chi(1)}.
\end{align*}
The contribution from the trivial character $\chi = 1$ is precisely
\[
  \frac{|\fM_2||\fC|^2}{n!^4}.
\]
To bound the other terms note that
\begin{align*}
  \frac1{n!} \frac{\langle \chi, r_w\rangle}{\chi(1)}
  &= \frac1{n!^2} \sum_{(\sigma,\tau)\in\fM_2} \frac{\chi(w(\sigma,\tau))}{\chi(1)}\\
  &= \frac{\#\{(\sigma,\tau)\in\fM_2: w(\sigma,\tau)=1\}}{n!^2} + \frac1{n!^2} \sum_{\substack{(\sigma,\tau)\in\fM_2\\ w(\sigma,\tau)\neq 1}} \frac{\chi(w(\sigma,\tau))}{\chi(1)},
\end{align*}
so
\[
  \frac1{n!} \frac{|\langle \chi,r_w\rangle|}{\chi(1)} \leq \frac{\#\{(\sigma,\tau)\in S_n^2 : w(\sigma,\tau) = 1\}}{n!^2} + \max_{\substack{(\sigma,\tau)\in\fM_2\\w(\sigma,\tau)\neq 1}} \frac{|\chi(w(\sigma,\tau))|}{\chi(1)}.
\]
Provided that $\chi\neq 1$ and $\langle \chi,\1_\fC\rangle \neq 0$, the second term is bounded by
\[
  \exp(-cn^{1/2}/\log n),
\]
just as in the previous section. The first term is also small, by the following Lemma (see \cite[Lemma~2.2]{Eb2}).

\begin{lem}
Let $w\in F_2$ be a non-trivial word of length at most $k \leq \sqrt{n/2}$. If $\sigma,\tau\in S_n$ are chosen uniformly at random then the probability that $w(\sigma,\tau) = 1$ is bounded by $\exp(-cn/k^2)$.
\end{lem}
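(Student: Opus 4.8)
The plan is to exhibit $M := \max\{1, \lfloor n/(10k^2)\rfloor\}$ points of $[n]$ that must all be fixed by $w(\sigma,\tau)$ whenever $w(\sigma,\tau) = 1$, and to show that each is fixed with conditional probability at most some absolute constant $c_0 < 1$. Since $M \geq n/(20k^2)$, this will give
\[
  P(w(\sigma,\tau) = 1) \;\leq\; P(a_1,\dots,a_M \text{ all fixed}) \;\leq\; c_0^M \;\leq\; \exp(-cn/k^2),
\]
with $c = \tfrac1{20}\log(1/c_0)$, once $n$ is large (shrinking $c$ absorbs the remaining small $n$). Throughout we use that $w$ is a reduced word $y_1\cdots y_k$ with $y_i \in \{\sigma^{\pm 1},\tau^{\pm 1}\}$ and $y_iy_{i+1}\neq 1$; here we may assume the length is exactly $k$, since the bound only weakens as $k$ grows.

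The points $a_j$ and the randomness of $\sigma,\tau$ are produced together by a lazy revelation. First I would take $a_1 = 1$ and trace its \emph{trajectory} $b_0 = a_1$, $b_\ell = y_{k-\ell+1}(b_{\ell-1})$ for $1\leq\ell\leq k$ (so $b_k = w(\sigma,\tau)(a_1)$), revealing a value of $\sigma$ or $\tau$ whenever one is called for that has not yet been assigned, sampled uniformly among the values still consistent with a bijection. Inductively, once $a_1,\dots,a_{j-1}$ have been processed, let $a_j$ be the least element of $[n]$ occurring in none of the earlier trajectories, and trace its trajectory the same way. One checks that the first $M$ trajectories involve fewer than $n$ points in total (using $k\leq\sqrt{n/2}$), so each $a_j$ is well defined, and that at most $Mk\leq n/2$ values are ever revealed, so every fresh revelation is uniform over at least $n/2$ elements. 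Call the trajectory of $a_j$ \emph{clean} if all $k$ of its steps use a fresh revelation.

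The crux is two estimates, conditional on the $\sigma$-algebra $\mathcal F_{j-1}$ generated by the revelations made while processing $a_1,\dots,a_{j-1}$. First, $P(\text{the trajectory of }a_j\text{ is not clean}\mid\mathcal F_{j-1}) \leq 1/2$: tracing the trajectory step by step, the first step is automatically fresh because $a_j$ occurs in no earlier trajectory, and at the $\ell$-th step, conditional on cleanness so far, the point $b_{\ell-1}$ is uniform over at least $n/2$ points and hence lands in the (already-determined) set of points whose relevant $\sigma$- or $\tau$-value has been revealed --- a set of size at most the number of revelations made so far, hence at most $(j-1)k+\ell-1$ --- with probability at most $2((j-1)k+\ell-1)/n$; summing over $\ell$ and using $k^2\leq n/2$ and $j\leq M$ gives $1/2$. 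Reducedness is used here: if $y_{k-\ell+1} = y_{k-\ell+2}^{-1}$ the $\ell$-th query would be forced to reuse the value just revealed, and this is excluded. Second, on the event that the trajectory of $a_j$ is clean its last point $b_k$ is a fresh revelation, hence uniform over at least $n/2$ values, so $P(b_k = a_j \text{ and the trajectory is clean}\mid\mathcal F_{j-1}) \leq 2/n$. Combining, $P(a_j\text{ fixed by }w(\sigma,\tau)\mid\mathcal F_{j-1}) \leq \tfrac12 + \tfrac2n =: c_0 < 1$; since $\{a_1,\dots,a_{j-1}\text{ all fixed}\}$ is $\mathcal F_{j-1}$-measurable, the tower property yields $P(a_1,\dots,a_M\text{ all fixed}) \leq c_0^M$, and $w(\sigma,\tau) = 1$ forces every $a_j$ to be fixed.

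The step I expect to be the main obstacle is making the lazy-revelation bookkeeping honest. One must keep the total number of revealed values well below $n$ so that each fresh revelation is genuinely almost uniform --- this is exactly why we can afford only $O(n/k^2)$ trajectories rather than covering all of $[n]$ --- and one must express the near-independence as conditioning on the entire $\sigma$-algebra $\mathcal F_{j-1}$ rather than on the coarser events ``$a_i$ is fixed'', so that conditioning on earlier trajectories happening to close up does not covertly bias the later revelations.
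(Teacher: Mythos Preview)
The paper does not prove this lemma itself but simply cites \cite[Lemma~2.2]{Eb2}; your lazy-revelation argument, tracing trajectories of $\Theta(n/k^2)$ test points and bounding collision probabilities step by step using reducedness of $w$, is exactly the standard proof and matches the argument in that reference. Your bookkeeping (choice of $M$, the bound $Mk \leq n/10$ on total revelations, conditioning on the full $\sigma$-algebra $\mathcal F_{j-1}$, and the use of reducedness to ensure the step just taken does not force the next query to be stale) is correct.
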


Thus provided $w\neq 1$ and $N\leq c n^{1/3}$ we have
\[
  \frac1{n!} \frac{|\langle \chi, r_w\rangle|}{\chi(1)} \leq \exp(-cn^{1/3}).
\]
Therefore, it follows for $w\neq w'$ that
\begin{align*}
  \left|\frac1{n!} \sum_{\substack{\chi\in\Irr(S_n)\\\chi\neq 1}} \frac{\langle \chi,\1_\fC\rangle^2 \langle \chi, r_{w^{-1}w'}\rangle}{\chi(1)} \right|
  &\leq \sum_{\chi\in\Irr(S_n)} |\langle \chi, \1_\fC\rangle|^2 \exp(-cn^{1/3})\\
  & = \frac{|\fC|}{n!} \exp(-cn^{1/3})\\
  &\leq \exp(-cn^{1/3}).
\end{align*}
Thus
\[
  \frac{|\fM_2|}{n!^2} \Var X \leq |W_N|\frac{|\fC|}{n!} + |W_N|^2 \exp(-cn^{1/3}),
\]
so we deduce
\[
  P_3(n) \leq \frac{\O(\log n)}{|W_N|} + \exp(-cn^{1/3}).
\]
Note that $|W_N| = 4 \cdot 3^{N-1}$. Taking $N = \lfloor cn^{1/3}\rfloor$, we conclude
\[
  P_3(n) \leq \exp(-cn^{1/3}).
\]

\section*{Acknowledgements}
I, Stefan-Christoph,  would like to express my deep gratitude to Jan-Christoph Schlage-Puchta for his proposal to consider this theme and for the many inspiring discussions we had. Furthermore, I would like to offer my special thanks to Andrzej Zuk for his interest in the subject. Moreover, I am very grateful to the referees for their helpful suggestions. Finally, I would like to express my very great appreciation to my family for their support and encouragement throughout my study.

\par\bigskip\noindent
\textbf{Author information}\\
\textsc{Sean Eberhard}, London, UK\\
E-mail: eberhard.math@gmail.com
\par\bigskip
\noindent
\textsc{Stefan-Christoph Virchow}, Institut für Mathematik, Universität Rostock\\
Ulmenstr. 69 Haus 3, 18057 Rostock, Germany\\
E-mail: stefan.virchow@uni-rostock.de

\end{document}